\newcommand{\nexteqv}{\displaybreak[0]\\ &\iff}
\newcommand{\nexteq}{\displaybreak[0]\\ &=}
\newcommand{\nextgeq}{\displaybreak[0]\\ &\geq}
\newcommand{\nextgq}{\displaybreak[0]\\ &>}
\newcommand{\nextlq}{\displaybreak[0]\\ &<}
\newcommand{\nextleq}{\displaybreak[0]\\ &\leq}
\newtheorem{lem}{Lemma}
\newtheorem{thm}[lem]{Theorem}
\newtheorem{prop}[lem]{Proposition}
\theoremstyle{definition}
\newtheorem{nota}[lem]{Notation}
\newtheorem{dfn}[lem]{Definition}
\newtheorem{exam}[lem]{Example}
\newtheorem{alg}{Algorithm}
\newtheorem{rmk}[lem]{Remark}
\DeclareMathOperator{\STab}{STab}
\begin{document}
\title[semistandard tableaux of given shape and weight]{On the greatest and least elements in the set of
semistandard tableaux of given shape and weight}

\author{Akihiro Munemasa}
\address{Research Center for Pure and Applied Mathematics, 
Graduate School of Information Sciences, 
Tohoku University, Sendai 980--8579, Japan}
\email{munemasa@math.is.tohoku.ac.jp}

\author{Minwon Na}
\address{Research Center for Pure and Applied Mathematics, 
Graduate School of Information Sciences, 
Tohoku University, Sendai 980--8579, Japan}
\email{minwon@ims.is.tohoku.ac.jp}

\date{June 29, 2015}
\keywords{Young diagram, semistandard tableau, Kostka number, partition, dominance order}
\subjclass[2010]{05A15, 05A17, 05A19, 05E10, 20C30}

\begin{abstract}
We give three algorithms to construct a semistandard tableau of
given shape and weight, where the weight is a composition which
is not necessarily a partition.  With respect to a natural partial
order on the set of semistandard tableaux, we show that the
set of semistandard tableaux of given shape and weight has a
unique greatest element and a unique least element. Two of
our algorithms give each of these elements.
\end{abstract}
\maketitle

\section{Introduction}
Kostka numbers give the number of semistandard tableaux of
given shape and weight, and they play a fundamental role
in representation theory of symmetric groups (see \cite{S}).
Much work have been done on the problem of computing Kostka numbers,
which is known to be \#P complete (see \cite{N}).
In this paper, we study the \emph{set}, rather than the number,
of semistandard tableaux of
given shape and weight, in the hope that our study could shed some light
on the computation of the cardinality of the set in question.
We do not assume the weight is a partition, rather, it is an arbitrary
composition.

Throughout this paper, $n$ will denote a positive integer. Let
$\mu=(\mu_1,\mu_2\ldots,\mu_k)$ and $a=(a_1,a_2,\ldots,a_h)$ be a
partition and composition of $n$, respectively. We say that $h$ is
the height $h(a)$ of $a$. We denote by $D_\mu$ the Young diagram of
$\mu$, and by $\STab(\mu,a)$ the set of all semistandard tableaux of
shape $\mu$ and weight $a$. More precisely,
\begin{align*}
D_\mu&=\{(i,j)\in\mathbb{Z}^2\mid 1\leq i\leq k,\;1\leq j\leq\mu_i\},\\
\STab(\mu,a)&=\{T\mid T:D_\mu\to\{1,\dots,h\},\\
&\qquad\quad T(i,j)\leq
T(i,j+1),\;T(i,j)<T(i+1,j),\;(|T^{-1}(\{i\})|)_{i\geq 1}=a\}.
\end{align*}

For compositions $a=(a_1,a_2,\ldots,a_h)$ and
$b=(b_1,b_2,\ldots,b_k)$ of $n$, we say $a$ dominates $b$, denoted
$a\unrhd b$, if $k\geq h$ and
\[
\sum_{i=1}^j a_i\geq\sum_{i=1}^j b_i
\]
for $j=1,2,\ldots,h$. The following is well known.

\begin{thm}[{\cite[p.~26, Exercise 2]{F}}]\label{thm.2}
Let $\mu$ and $\lambda$ be partitions of $n$. Then
$\STab(\mu,\lambda)$ is nonempty if and only if $\mu\unrhd\lambda$.
\end{thm}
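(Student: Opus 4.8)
The plan is to prove the two implications separately. The forward implication (nonemptiness forces $\mu\unrhd\lambda$) is a short counting argument, whereas the reverse implication (dominance forces nonemptiness) requires an explicit construction and is where the real work lies.

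For the forward direction, suppose $T\in\STab(\mu,\lambda)$. The key observation is that column-strictness bounds each entry below by its row index: since the column through $(i,j)$ contains the strictly increasing positive integers $T(1,j)<T(2,j)<\cdots<T(i,j)$, we get $T(i,j)\geq i$. Consequently every cell carrying a value in $\{1,\dots,j\}$ must lie in rows $1,\dots,j$. Counting such cells in two ways, the weight condition yields $\sum_{i=1}^{j}\lambda_i=|T^{-1}(\{1,\dots,j\})|\leq\sum_{i=1}^{j}\mu_i$, which is exactly $\mu\unrhd\lambda$ (the inequality being trivial once $j$ exceeds the number of parts, with the usual convention that omitted parts are $0$).

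For the reverse direction, I would build a tableau by filling in the values $1,2,\dots,h$ in increasing order, placing all $\lambda_v$ copies of $v$ at once. I would maintain the invariant that after inserting the values $\leq v$ the occupied cells form the Young diagram of a partition $\rho^{(v)}$, and that $\rho^{(v)}/\rho^{(v-1)}$ is a horizontal strip; this makes semistandardness automatic, since each value then occurs at most once in any column and is deposited below strictly smaller values. Filling the copies of $v$ greedily—into the leftmost legal columns, as high as the horizontal-strip condition permits—reduces the whole problem to producing an interlacing chain $\emptyset=\rho^{(0)}\subseteq\rho^{(1)}\subseteq\cdots\subseteq\rho^{(h)}=\mu$ with $|\rho^{(v)}|=\sum_{i\leq v}\lambda_i$.

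The main obstacle is verifying that this greedy procedure never stalls and that it terminates precisely at $\mu$; here the dominance hypothesis is used crucially. I would show by induction on $v$ that the partial sums $\sum_{i\leq j}\rho^{(v)}_i$ stay pinched between those of $\lambda$ and those of $\mu$, so that at each stage there is enough room inside $D_\mu$ to lay down the $\lambda_v$ copies of $v$ as a horizontal strip, while the inequalities $\sum_{i\leq j}\mu_i\geq\sum_{i\leq j}\lambda_i$ prevent the shape from overshooting $\mu$. An alternative worth considering is an induction along a saturated chain in the dominance lattice from $\lambda$ up to $\mu$, starting from the obvious tableau of shape $\lambda$ whose $i$-th row is constant equal to $i$ and adjusting the filling each time a single box is raised; but that box-raising step is itself a jeu-de-taquin-type slide whose correctness is no easier than the greedy analysis, so I would favor the direct construction above.
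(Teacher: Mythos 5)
Your ``only if'' direction is correct and complete: column-strictness gives $T(i,j)\geq i$, so every cell whose entry is at most $j$ lies in the first $j$ rows, and double counting yields $\sum_{i=1}^{j}\lambda_i=|T^{-1}(\{1,\dots,j\})|\leq\sum_{i=1}^{j}\mu_i$; the requirement in the paper's definition of $\unrhd$ that $\lambda$ have at least as many parts as $\mu$ also follows from $T(i,1)\geq i$. This is more elementary than what the paper does (it cites \cite{F} for this theorem, and its closest analogue, Lemma~\ref{lem.4b}, leans on \cite[Lemma 3.7.1]{C}).

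The ``if'' direction, however, is only a plan, and the gap sits exactly where you locate ``the main obstacle.'' First, the greedy rule is not pinned down, and under its most natural reading it fails: take $\mu=(3,3)$ and $\lambda=(2,2,2)$. After the $1$'s you have $\rho^{(1)}=(2)$; putting the two $2$'s ``into the leftmost legal columns'' means the cells $(2,1)$ and $(2,2)$, giving $\rho^{(2)}=(2,2)$, and now the two $3$'s cannot be placed, since the remaining cells $(1,3)$ and $(2,3)$ share a column. The unique tableau here has $\rho^{(2)}=(3,1)$, i.e., the $2$'s must first extend the \emph{top} row to the right before spilling into row~2, so the correct priority is ``highest row first,'' not ``leftmost column first.'' Second, even with the correct rule, the claims that the partial sums ``stay pinched'' and that ``there is enough room'' at each stage constitute the entire content of the theorem, and neither the invariant nor its inductive proof is actually supplied; this is where the dominance hypothesis must be spent, and it is not spent. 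For comparison, the paper's constructive proof (of the more general Theorem~\ref{lem.15}) works top down: it removes from $\mu$ a totally disconnected skew shape of size $a_h$ anchored at the row $s(\mu,a)=\max\{i\mid\mu_i\geq a_h\}$, obtaining $\rho(\mu,a)$, and Lemma~\ref{lem.8b} performs the explicit partial-sum computation showing $\rho(\mu,a)\unrhd\lambda(a')$, which is precisely what licenses the recursion in Theorem~\ref{thm.1a} and Algorithm~\ref{a.2}. Your bottom-up version needs an analogous lemma --- a precise statement of the invariant satisfied by $\rho^{(v)}$ and a proof that the next horizontal strip of size $\lambda_{v+1}$ always fits --- before the reverse implication can be considered proved.
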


Let $\lambda(a)$ denote the partition of $n$ associated with $a$,
that is, the partition obtained from $a$ by rearranging the parts of
$a$ in non-increasing order. Then one can strengthen
Theorem~\ref{thm.2} using \cite[Lemma 3.7.1]{C}, as follows:

\begin{thm}[{\cite[p.~50, Proposition 2]{F}}]\label{lem.15}
Let $\mu$ and $a$ be a partition and composition of $n$,
respectively. Then $\STab(\mu,a)$ is nonempty if and only if
$\mu\unrhd\lambda(a)$.
\end{thm}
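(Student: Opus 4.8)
The plan is to reduce the general (composition) case to the partition case already covered by Theorem~\ref{thm.2}. The crucial point is that whether $\STab(\mu,a)$ is empty or not should depend only on the multiset of parts of $a$, equivalently only on the associated partition $\lambda(a)$, and not on the order in which the parts are listed. Granting this, the proof is immediate: since $\lambda(a)$ is a partition of $n$, Theorem~\ref{thm.2} applied to the pair $(\mu,\lambda(a))$ gives that $\STab(\mu,\lambda(a))$ is nonempty if and only if $\mu\unrhd\lambda(a)$, and the order-independence then transfers this equivalence to $\STab(\mu,a)$.

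So the first and main step I would carry out is to prove that $\STab(\mu,a)$ is nonempty if and only if $\STab(\mu,\lambda(a))$ is nonempty. In fact I would aim for the stronger statement that the cardinalities agree, $|\STab(\mu,a)|=|\STab(\mu,\lambda(a))|$, i.e.\ that the Kostka number depends only on $\lambda(a)$. Since $\lambda(a)$ is obtained from $a$ by finitely many adjacent transpositions of parts, it suffices to produce, for each index $i$, a bijection between $\STab(\mu,a)$ and $\STab(\mu,a')$, where $a'$ is obtained from $a$ by swapping its $i$-th and $(i+1)$-th parts. This is exactly the classical Bender--Knuth involution, which is the content of \cite[Lemma 3.7.1]{C}; I would simply invoke it, since it is the input the paper has announced.

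If one instead wanted a self-contained construction, the bijection is built by looking, for each tableau $T$, only at the cells containing $i$ or $i+1$. Within each column at most one of these two values can occur, so in each row the $i$'s and $(i+1)$'s that are not ``locked'' (not sitting in a column that already contains the other value) form a contiguous block of the form $i\cdots i\,(i+1)\cdots(i+1)$; one reverses the numbers of $i$'s and $(i+1)$'s in each such block, leaving the locked entries untouched. The delicate part, and where I expect the only real work to lie, is checking that this local rearrangement again yields a semistandard tableau (rows weakly increasing, columns strictly increasing) and that the resulting map is an involution with the expected effect on the weight. Once this well-definedness and bijectivity are verified, composing a sequence of adjacent transpositions that sorts $a$ into $\lambda(a)$ yields $|\STab(\mu,a)|=|\STab(\mu,\lambda(a))|$, and combining this with Theorem~\ref{thm.2} completes the proof.
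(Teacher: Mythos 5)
Your argument is correct, and it is essentially the classical derivation that the paper cites but deliberately does not adopt as its own proof. You reduce to the partition case by showing $|\STab(\mu,a)|=|\STab(\mu,\lambda(a))|$ via Bender--Knuth involutions for adjacent transpositions of the weight --- exactly the content of \cite[Lemma 3.7.1]{C} --- and then apply Theorem~\ref{thm.2}; this is the route the authors acknowledge in the sentence preceding the theorem. The paper's own proof differs in the ``if'' direction: given $\mu\unrhd\lambda(a)$, it constructs an element of $\STab(\mu,a)$ directly, either through the partition $\rho(\mu,a)$ of Lemma~\ref{lem.8b} and the recursion of Theorem~\ref{thm.1a} (which in fact produces the greatest element of $\STab(\mu,a)$), or through the removable-box recursion of Proposition~\ref{lem.4}; the ``only if'' direction is still obtained from \cite[Lemma 3.7.1]{C} together with Theorem~\ref{thm.2}, as in Lemma~\ref{lem.4b}, so there you and the paper agree. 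Your route is shorter and yields a bijection between $\STab(\mu,a)$ and $\STab(\mu,\lambda(a))$; the paper's route yields an explicit and efficient algorithm for writing down a tableau of shape $\mu$ and weight $a$ (the introduction points out that unwinding a chain of involutions is inefficient unless $a$ and $\lambda(a)$ differ by a single transposition), together with the additional structural information that the constructed tableaux are the extreme elements of the poset $(\STab(\mu,a),\leq)$. One small correction to your sketch of the involution: a column of a semistandard tableau may contain both an $i$ and an $i+1$ (necessarily in consecutive rows), so ``at most one of these two values can occur'' should read ``each of these two values occurs at most once''; the locking condition you state parenthetically is the correct one, and the rest of your description is the standard construction.
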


This theorem is incorrectly stated in \cite[Lemma 3.7.3]{C}, where
$\mu\unrhd\lambda(a)$ is replaced by $\mu\unrhd a$. For example, let
$\mu=(5,3)\vdash 8$ and $a=(2,6)\vDash 8$. Then $\mu\unrhd a$, but
$\STab(\mu,a)=\emptyset$.

The purpose of this note is to give explicit algorithms
to produce an element of $\STab(\mu,a)$, thereby giving a direct
proofs of Theorem~\ref{lem.15}. We also introduce a natural partial
order on $\STab(\mu,a)$ and show that it has unique
greatest and least elements, by showing that the elements
produced by two of the three algorithms have the respective property.
Although the proof of
Theorem~\ref{lem.15} using \cite[Lemma 3.7.1]{C} or \cite[p.~50,
Proposition 2]{F} gives, in principle, a bijection between
$\STab(\mu,a)$ and $\STab(\mu,\lambda(a))$, it does not give an
efficient algorithm to describe an element of $\STab(\mu,a)$ unless
the permutation required to transform $a$ to $\lambda(a)$ is a
transposition. We show that, in our
Proposition~\ref{lem.4} below,
a direct approach for proving Theorem~\ref{lem.15} along the line
of \cite[Lemma~3.7.3]{C} can be justified, and we describe
an algorithm to produce an element of $\STab(\mu,a)$ in this way.

This paper is organized as follows. After giving preliminaries in
Section 2, we describe the procedure of constructing the
greatest and least elements of $\STab(\mu,a)$ in Section
\ref{sec:1} and \ref{sec:3}, respectively. In Section~\ref{sec:2}, we
justify the proof of \cite[Lemma~3.7.3]{C} in
Proposition~\ref{lem.4}, using the ideas from Section~\ref{sec:1}.

\section{Preliminaries}
A composition of $n$ is a sequence $a=(a_1,a_2,\dots,a_h)$ of
positive integers such that $\sum_{i=1}^h a_i=n$. We write $a\vDash
n$ if $a$ is a composition of $n$. A partition of $n$ is a
non-increasing sequence $\mu=(\mu_1,\mu_2,\dots,\mu_k)$ of positive
integers such that $\sum_{i=1}^k \mu_i=n$. We write $\mu\vdash n$ if
$\mu$ is a partition of $n$. By convention, we define $\mu_i=0$ if
$i$ exceeds the number of parts in a partition $\mu$.

For a composition $a=(a_1,a_2,\ldots,a_h)\vDash n$, we define
\[
a^{(i)}=(a_1,\ldots,a_{i-1},a_i-1,a_{i+1},\ldots,a_h)
\]
for each $1\leq i\leq h$. Set
\begin{align*}
a'&=(a_1,a_2,\dots,a_{h-1})\vDash n-a_h,\\
\tilde{a}&=
\begin{cases}
(a_1,a_2,\dots,a_h-1)&\text{ if $a_h\geq 2$},\\
(a_1,a_2,\dots,a_{h-1})&\text{ if $a_h=1$}.
\end{cases}
\end{align*}
Then $\tilde{a}\vDash n-1$. Let
\[
q(a)=\max\{i\mid 1\leq i\leq h,\;\lambda(a)_i=a_h\}.
\]
Then
\begin{equation}\label{eq:01j}
\lambda(a)_{q(a)}=a_h>\lambda(a)_{q(a)+1}.
\end{equation}
Set
\[
\tilde{\lambda}(a)=\lambda(a)^{(q(a))}\vdash n-1.
\]
For a partition $\mu=(\mu_1,\dots,\mu_k)\vdash n$, we define
\[
s(\mu,a)=\max\{i\mid 1\leq i\leq k,\;\mu_i\geq a_h\}.
\]
Clearly,
\begin{equation}\label{eq:01s}
\mu_{s(\mu,a)}\geq a_h>\mu_{s(\mu,a)+1}.
\end{equation}

For $\rho=(\rho_1,\rho_2,\ldots,\rho_h)\vdash m$ and
$\mu=(\mu_1,\mu_2,\ldots,\mu_k)\vdash n$, we write
\[
\rho\preceq\mu
\]
if $m\leq n$, $h\leq k$ and $\rho_i\leq\mu_i$ for all $i$ with
$1\leq i\leq h$. For such $\rho$ and $\mu$, we say that $\mu/\rho$ is a
\emph{skew shape}, and we denote
$D_\mu\setminus D_\rho$ by $D_{\mu/\rho}$.
We say that the skew shape $\mu/\rho$ is totally
disconnected if $\rho_i\geq\mu_{i+1}$ for all $i$ with $1\leq i\leq
h$. Set
\begin{align*}
\mathcal{B}(\mu,a)=\{\rho\vdash n-a_h\mid
\rho\unrhd\lambda(a'),\;\rho\preceq\mu,\;\mu/\rho:\text{totally
disconnected}\}.
\end{align*}

\begin{lem}\label{lem.0a}
For a composition $a\vDash n$,
$\lambda(\tilde{a})=\tilde{\lambda}(a)$.
\end{lem}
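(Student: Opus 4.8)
The plan is to reduce the claimed equality of partitions to two elementary facts: that $\tilde\lambda(a) = \lambda(a)^{(q(a))}$ and $\tilde a$ have the same multiset of positive parts, and that $\lambda(a)^{(q(a))}$ is already a partition. Since $\lambda(\tilde a)$ is by definition the unique non-increasing rearrangement of the multiset of parts of $\tilde a$, these two facts together force $\lambda(\tilde a) = \lambda(a)^{(q(a))} = \tilde\lambda(a)$, which is exactly the assertion.

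For the multiset of parts, I would read off the definition of $\tilde a$. If $a_h \geq 2$ then $\tilde a$ arises from $a$ by replacing one occurrence of $a_h$ with $a_h - 1$, and if $a_h = 1$ it arises by deleting one occurrence of $a_h = 1$; in both cases, under the convention that a zero part is discarded, passing from $a$ to $\tilde a$ lowers a single occurrence of the value $a_h$ by one. On the other side, $\lambda(a)_{q(a)} = a_h$ by the definition of $q(a)$, so forming $\lambda(a)^{(q(a))}$ performs precisely the same operation on the parts of $\lambda(a)$. Hence $\tilde a$ and $\lambda(a)^{(q(a))}$ have the same multiset of positive parts.

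The one substantive point, and where I expect the real work to sit, is checking that decrementing position $q(a)$ does not spoil the non-increasing order, so that $\lambda(a)^{(q(a))}$ is genuinely a partition. This is exactly what (\ref{eq:01j}) supplies: from $\lambda(a)_{q(a)} = a_h > \lambda(a)_{q(a)+1}$ I obtain $a_h - 1 \geq \lambda(a)_{q(a)+1}$, while every entry in a position before $q(a)$ is at least $a_h > a_h - 1$; hence the sequence stays weakly decreasing after the decrement. Choosing the largest such index $q(a)$, rather than an earlier occurrence of $a_h$, is essential here, since decrementing an earlier copy would in general break monotonicity. When $a_h = 1$ one has $q(a) = h$, the decremented entry equals $0$ and is discarded, in agreement with (\ref{eq:01j}) under the convention $\lambda(a)_{h+1} = 0$. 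With $\lambda(a)^{(q(a))}$ confirmed to be a partition, the multiset identity of the previous paragraph yields $\lambda(\tilde a) = \lambda(a)^{(q(a))} = \tilde\lambda(a)$, as required.
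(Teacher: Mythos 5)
Your argument is correct and is, in substance, a fully written-out version of what the paper dispatches with the single line ``Immediate from the definition'': you check that $\tilde a$ and $\lambda(a)^{(q(a))}$ have the same multiset of positive parts and that decrementing the \emph{last} occurrence of the value $a_h$ in $\lambda(a)$ (which is what (\ref{eq:01j}) guarantees) preserves the non-increasing order, so uniqueness of the sorted rearrangement forces $\lambda(\tilde a)=\tilde\lambda(a)$. Nothing is missing, and your remark that choosing the largest index $q(a)$ is what makes the decrement safe is exactly the right point to isolate.
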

\begin{proof}
Immediate from the definition.
\end{proof}

\begin{lem}\label{lem.3a}
Let $p$ and $q$ be positive integers. Let $\mu\vdash n$ and
$\lambda\vdash n$ satisfy $\mu_p>\mu_{p+1}$,
$\lambda_q>\lambda_{q+1}$ and $\mu\unrhd\lambda$. Then the following
are equivalent.
\begin{enumerate}
\item $\mu^{(p)}\unrhd\lambda^{(q)}$,
\item either $p\geq q$, or  $p<q$ and
$\sum_{i=1}^j\mu_i>\sum_{i=1}^j\lambda_i$ for all $j$ with $p\leq
j<q$.
\end{enumerate}
\end{lem}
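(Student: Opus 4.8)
The plan is to translate the dominance relation into a single pointwise inequality on partial sums. For a partition $\nu$, write $S_j(\nu)=\sum_{i=1}^j \nu_i$, using the convention that $\nu_i=0$ beyond the number of parts. Since $\mu$ and $\lambda$ are partitions of the same $n$ with $\mu\unrhd\lambda$, I have $D_j:=S_j(\mu)-S_j(\lambda)\geq 0$ for every $j\geq 1$; this nonnegativity is what drives the whole argument. As a preliminary remark, I would record that the hypotheses $\mu_p>\mu_{p+1}$ and $\lambda_q>\lambda_{q+1}$ guarantee that $\mu^{(p)}$ and $\lambda^{(q)}$ are genuine partitions of $n-1$, and that for partitions of a common size the relation $\unrhd$ of the paper (height comparison together with partial sums up to the shorter height) is equivalent to demanding $S_j(\cdot)\geq S_j(\cdot)$ for \emph{all} $j\geq 1$, since the partial sums beyond the shorter height, as well as the height inequality itself, are then automatic. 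Thus I may test (i) by comparing partial sums at every index.

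Next I would compute the partial sums of the two modified partitions. Subtracting $1$ from the $p$-th part lowers $S_j(\mu)$ by exactly $1$ once $j\geq p$ and changes nothing for $j<p$, and similarly for $\lambda^{(q)}$ at the threshold $q$. Writing $[\cdot]$ for the indicator, this gives
\[
S_j(\mu^{(p)})=S_j(\mu)-[j\geq p],\qquad S_j(\lambda^{(q)})=S_j(\lambda)-[j\geq q],
\]
so that condition (i) is equivalent to
\[
D_j\geq [j\geq p]-[j\geq q]\quad\text{for all }j\geq 1.
\]

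The final step is to read off the right-hand side. If $p\geq q$, then $[j\geq p]-[j\geq q]\leq 0$ for every $j$, so the inequality is forced by $D_j\geq 0$ and (i) holds unconditionally, which is the first alternative of (ii). If $p<q$, the right-hand side equals $1$ exactly on the range $p\leq j<q$ and is $\leq 0$ elsewhere, so (i) holds if and only if $D_j\geq 1$, that is $S_j(\mu)>S_j(\lambda)$, for each $j$ with $p\leq j<q$—precisely the second alternative of (ii). I do not expect a genuine obstacle here: the computation is short, and the only point needing care is the bookkeeping that the paper's dominance order coincides with the uniform ``partial sums at all $j$'' condition, which I would settle once at the outset so that the main argument reduces to the elementary case analysis on $[j\geq p]-[j\geq q]$.
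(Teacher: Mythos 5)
Your proposal is correct and follows essentially the same route as the paper: both compute $S_j(\mu^{(p)})=S_j(\mu)-[j\geq p]$ and $S_j(\lambda^{(q)})=S_j(\lambda)-[j\geq q]$ and then observe that, given $\mu\unrhd\lambda$, the only indices where the comparison is not automatic are $p\leq j<q$, where it becomes $\sum_{i=1}^j\mu_i>\sum_{i=1}^j\lambda_i$. Your indicator-function bookkeeping and the preliminary remark reducing $\unrhd$ to the all-$j$ partial-sum condition are just a cleaner packaging of the paper's explicit four-case table.
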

\begin{proof}
Observe
\begin{align*}
\sum_{i=1}^j\mu^{(p)}_i &=\begin{cases} \sum_{i=1}^j\mu_i
&\text{if $1\leq j<p$,}\\
\sum_{i=1}^j\mu_i-1 &\text{otherwise,}
\end{cases}
\\
\sum_{i=1}^j\lambda^{(q)}_i&=\begin{cases} \sum_{i=1}^j\lambda_i
&\text{if $1\leq j<q$,}\\
\sum_{i=1}^j\lambda_i-1 &\text{otherwise.}
\end{cases}
\end{align*}
Thus
\[
\sum_{i=1}^j\mu^{(p)}_i\geq\sum_{i=1}^j\lambda^{(q)}_i \iff
\begin{cases}
\sum_{i=1}^j\mu_i\geq\sum_{i=1}^j\lambda_i&\text{if $1\leq j<\min\{p,q\}$,}\\
\sum_{i=1}^j\mu_i\geq\sum_{i=1}^j\lambda_i-1&\text{if
$q\leq j<p$,}\\
\sum_{i=1}^j\mu_i-1\geq\sum_{i=1}^j\lambda_i&\text{if
$p\leq j<q$,}\\
\sum_{i=1}^j\mu_i-1\geq\sum_{i=1}^j\lambda_i-1&\text{if
$\max\{p,q\}\leq j$.}
\end{cases}
\]
Since $\mu\unrhd\lambda$, we have
\begin{align*}
\text{(i)}&\iff \sum_{i=1}^j\mu_i-1\geq\sum_{i=1}^j\lambda_i
\quad\text{if }p\leq j<q, \nexteqv \text{(ii).}
\end{align*}
\end{proof}

\begin{dfn}\label{def.1}
Let $\mu\vdash n$ and $a\vDash n$ with $\mu\unrhd \lambda(a)$. A box
of coordinate $(i,\mu_i)$ is \textit{removable for the pair
$(\mu,a)$} if $\mu_i>\mu_{i+1}$ and
$\mu^{(i)}\unrhd\lambda(\tilde{a})$. We denote by $R{(\mu,a)}$ the set
of all $i$ such that $(i,\mu_i)$ is removable for the pair
$(\mu,a)$.
\end{dfn}

\begin{lem}\label{lem.10}
Let $\mu\vdash n$ and $a\vDash n$ satisfy $\mu\unrhd\lambda(a)$.
Then $s(\mu,a)\in R{(\mu,a)}$.
\end{lem}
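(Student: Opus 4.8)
The plan is to verify directly the two conditions in Definition~\ref{def.1} for the index $p:=s(\mu,a)$. The first condition, $\mu_p>\mu_{p+1}$, is immediate: the defining inequality (\ref{eq:01s}) gives $\mu_p\geq a_h>\mu_{p+1}$. So the entire content lies in the second condition, $\mu^{(p)}\unrhd\lambda(\tilde a)$.

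To handle this, I would first rewrite the target using the machinery already set up. By Lemma~\ref{lem.0a} we have $\lambda(\tilde a)=\tilde\lambda(a)=\lambda(a)^{(q(a))}$, so writing $\lambda:=\lambda(a)$ and $q:=q(a)$, the goal becomes $\mu^{(p)}\unrhd\lambda^{(q)}$. This is exactly the situation governed by Lemma~\ref{lem.3a}: its hypotheses $\mu\unrhd\lambda$, $\mu_p>\mu_{p+1}$, and $\lambda_q>\lambda_{q+1}$ hold by assumption, by the first step, and by (\ref{eq:01j}) respectively. Thus it suffices to verify condition (ii) of that lemma.

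Condition (ii) is automatic when $p\geq q$, so the only real work is the case $p<q$, where I must show $\sum_{i=1}^j\mu_i>\sum_{i=1}^j\lambda_i$ for every $j$ with $p\leq j<q$. The governing observation is that both $p$ and $q$ are pinned to the value $a_h$: from (\ref{eq:01s}) and (\ref{eq:01j}), for each index $i$ with $p<i\leq q$ we have $\mu_i\leq\mu_{p+1}<a_h=\lambda_q\leq\lambda_i$, so $\mu_i<\lambda_i$ throughout this range. Hence the difference $f(j):=\sum_{i=1}^j(\mu_i-\lambda_i)$ is strictly decreasing as $j$ runs from $p$ to $q$, and its minimum over $p\leq j\leq q-1$ is attained at $j=q-1$. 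It then remains only to bound this single value from below.

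The crux, and the step I expect to require the most care, is exactly this bound at $j=q-1$. Here I would combine $f(q)\geq 0$ (which is just $\mu\unrhd\lambda$) with $\lambda_q-\mu_q\geq 1$ (since $\lambda_q=a_h>\mu_q$) to obtain $f(q-1)=f(q)+(\lambda_q-\mu_q)\geq 1>0$. Because $f$ is decreasing on the relevant range, every $j$ with $p\leq j<q$ satisfies $f(j)\geq f(q-1)>0$, which is condition (ii). This completes the verification and shows $p\in R(\mu,a)$.
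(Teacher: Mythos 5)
Your proposal is correct and follows essentially the same route as the paper: reduce via Lemma~\ref{lem.0a} and Lemma~\ref{lem.3a} to condition (ii), then use $\mu_i<a_h\leq\lambda_i$ for $p<i\leq q$ together with $\mu\unrhd\lambda$ at $j=q$ to force strict inequality of the partial sums. Your monotonicity-of-$f$ packaging is just a minor rephrasing of the paper's direct summation of those strict inequalities.
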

\begin{proof}
Write $p=s(\mu,a)$, $q=q(a)$, $\lambda=\lambda(a)$. Then
$\lambda_q>\lambda_{q+1}$ by (\ref{eq:01j}) and $\mu_p>\mu_{p+1}$ by
(\ref{eq:01s}). Moreover, by Lemma~\ref{lem.0a}, we have
$\lambda(\tilde{a})=\tilde{\lambda}(a)=\lambda^{(q)}$. Thus, in view
of Lemma~\ref{lem.3a}, it suffices to show
\[
\text{either }p\geq q, \text{ or }p<q\text{ and
}\sum_{i=1}^j\mu_i>\sum_{i=1}^j\lambda_i \text{ for all $j$ with
}p\leq j<q.
\]
Suppose $p<q$ and let $p\leq j<q$. Let $a=(a_1,\dots,a_h)$. If
$j+1\leq i\leq q$, then $p<i\leq q$, so $\mu_i<a_h\leq\lambda_i$.
Thus
\begin{align*}
\sum_{i=1}^j\mu_i &=\sum_{i=1}^q\mu_i-\sum_{i=j+1}^q\mu_i \nextgeq
\sum_{i=1}^q\lambda_i-\sum_{i=j+1}^q\mu_i \nextgq
\sum_{i=1}^q\lambda_i-\sum_{i=j+1}^q\lambda_i \nexteq
\sum_{i=1}^j\lambda_i.
\end{align*}
\end{proof}

From Lemma~\ref{lem.10}, we find $R{(\mu,a)}\neq\emptyset$. Set
\[l(\mu,a)=\min R{(\mu,a)}.\]

\begin{lem}\label{lem.5}
Let $\mu\vdash n$ and $a\vDash n$ satisfy $\mu\unrhd\lambda(a)$.
Then $l(\mu,a)\leq s(\mu,a)$.
\end{lem}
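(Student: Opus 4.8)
The plan is to read off this inequality directly from Lemma~\ref{lem.10}. Recall that $l(\mu,a)$ is \emph{defined} as $\min R(\mu,a)$, so to bound it above by $s(\mu,a)$ it is enough to show that $s(\mu,a)$ is itself a member of $R(\mu,a)$; the minimum of a nonempty set of integers is never larger than any one of its elements. But this membership is precisely the conclusion of Lemma~\ref{lem.10}, which asserts $s(\mu,a)\in R(\mu,a)$ under the very hypothesis $\mu\unrhd\lambda(a)$ that we are given here.

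Thus the proof reduces to a single invocation: by Lemma~\ref{lem.10} we have $s(\mu,a)\in R(\mu,a)$, whence $l(\mu,a)=\min R(\mu,a)\leq s(\mu,a)$. The same lemma simultaneously certifies that $R(\mu,a)\neq\emptyset$, so that $l(\mu,a)$ is in fact well-defined, as was already observed in the sentence immediately preceding the statement.

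There is essentially no obstacle to overcome here. All of the combinatorial substance---the comparison of the partial sums of $\mu$ and $\lambda(a)$ through Lemma~\ref{lem.3a}, and the chain of inequalities $\mu_i<a_h\leq\lambda_i$ for indices $p<i\leq q$---has already been discharged inside the proof of Lemma~\ref{lem.10}. What remains is purely the order-theoretic fact that a minimum is a lower bound realized within the set, so I expect the argument to be a one-line deduction rather than any genuine calculation.
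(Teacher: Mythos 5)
Your proof is correct and is exactly the paper's argument: the paper also derives this as an immediate consequence of Lemma~\ref{lem.10}, since $s(\mu,a)\in R(\mu,a)$ forces $l(\mu,a)=\min R(\mu,a)\leq s(\mu,a)$. Nothing further is needed.
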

\begin{proof}
Immediate from Lemma~\ref{lem.10}.
\end{proof}

\begin{lem}\label{lem.4b}
Let $\mu=(\mu_1,\mu_2,\ldots,\mu_k)\vdash n$ and
$a=(a_1,a_2,\ldots,a_h)\vDash n$. Let $i$ be an integer with $1\leq
i\leq k$. If there exists a tableau $T\in\STab(\mu, a)$ such that
$T(i,\mu_i)=h$, then $i\in R{(\mu,a)}$.
\end{lem}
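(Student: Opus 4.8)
The plan is to verify directly the two defining conditions of removability from Definition~\ref{def.1}, namely that $\mu_i>\mu_{i+1}$ and that $\mu^{(i)}\unrhd\lambda(\tilde a)$. Before starting I would observe that the existence of $T$ gives $\STab(\mu,a)\neq\emptyset$, so Theorem~\ref{lem.15} yields $\mu\unrhd\lambda(a)$; this is precisely the standing hypothesis under which $R(\mu,a)$ is defined, so the conclusion $i\in R(\mu,a)$ is meaningful.

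For the first condition I would argue by contradiction. If $\mu_i=\mu_{i+1}$, then the cell $(i+1,\mu_i)$ lies in $D_\mu$ directly below $(i,\mu_i)$, and the column-strict condition forces $T(i+1,\mu_i)>T(i,\mu_i)=h$. Since $h$ is the largest available label, this is impossible, whence $\mu_i>\mu_{i+1}$. In particular $\mu^{(i)}$ is again a partition, now of $n-1$.

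The heart of the argument is the second condition, for which the key move is to delete from $T$ the single cell $(i,\mu_i)$, whose entry is $h$. Because $\mu_i>\mu_{i+1}$, the surviving cells form the Young diagram $D_{\mu^{(i)}}$, and the restriction $T'$ of $T$ to $D_{\mu^{(i)}}$ remains semistandard, since deleting a cell cannot create a violation of the row- or column-inequalities among the remaining entries. Removing one occurrence of $h$ turns the weight $a$ into $\tilde a$: if $a_h\geq 2$ the number of $h$'s drops to $a_h-1$, while if $a_h=1$ the label $h$ disappears altogether and $T'$ takes values in $\{1,\dots,h-1\}$. In either case $T'\in\STab(\mu^{(i)},\tilde a)$, so this set is nonempty.

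Finally, applying Theorem~\ref{lem.15} to the partition $\mu^{(i)}$ and the composition $\tilde a$ (both of $n-1$) gives $\mu^{(i)}\unrhd\lambda(\tilde a)$. Combined with $\mu_i>\mu_{i+1}$, this shows that $(i,\mu_i)$ is removable for $(\mu,a)$, i.e.\ $i\in R(\mu,a)$. I expect the whole proof to be routine; the only points requiring care are the case split for the weight when $a_h=1$ versus $a_h\geq 2$, and confirming that the deletion really produces a partition and a composition of the \emph{same} integer $n-1$, so that Theorem~\ref{lem.15} applies verbatim.
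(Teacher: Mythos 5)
Your proof is correct and takes essentially the same route as the paper's: delete the cell $(i,\mu_i)$, observe that the restriction of $T$ lies in $\STab(\mu^{(i)},\tilde a)$, and deduce $\mu^{(i)}\unrhd\lambda(\tilde a)$ from nonemptiness. The only cosmetic difference is that you invoke Theorem~\ref{lem.15} directly, whereas the paper reaches the same dominance conclusion via \cite[Lemma 3.7.1]{C} together with Theorem~\ref{thm.2}.
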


\begin{proof}
Since $T\in\STab(\mu,a)$ and $T(i,\mu_i)=h$, we have
$(i+1,\mu_i)\notin D_{\mu}$, so $\mu_{i}>\mu_{i+1}$. Also,
$\tilde{T}=T|_{D_{\mu}\setminus\{(i,\mu_i)\}}\in\STab(\mu^{(i)},\tilde{a})$.
By {\cite[Lemma 3.7.1]{C}}, we obtain
$\STab(\mu^{(i)},\lambda(\tilde{a}))\neq\emptyset$. Thus
$\mu^{(i)}\unrhd\lambda(\tilde{a})$ and $i\in R{(\mu,a)}$.
\end{proof}

In fact, the converse of Lemma~\ref{lem.4b} is also true. We will
prove it in Section~\ref{sec:2}.

\begin{lem}\label{lem.19}
Let $\mu\vdash n$ and $a=(a_1,a_2,\ldots,a_h)\vDash n$ satisfy
$\mu\unrhd\lambda(a)$. For $T\in\STab(\mu,a)$, we have
\[
l(\mu,a)\leq\min\{i\mid T(i,\mu_i)=h\}\leq s(\mu,a).
\]
\end{lem}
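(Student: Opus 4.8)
The plan is to establish the two inequalities separately. For the lower bound $l(\mu,a)\le\min\{i\mid T(i,\mu_i)=h\}$, I would set $m=\min\{i\mid T(i,\mu_i)=h\}$, noting that this set is nonempty: since $a_h\ge 1$, the value $h$ occurs in $T$, and the entry at the right end of any row that contains $h$ must itself be $h$, because the row is weakly increasing and $h$ is the largest available value. As $T(m,\mu_m)=h$, Lemma~\ref{lem.4b} gives $m\in R(\mu,a)$, whence $l(\mu,a)=\min R(\mu,a)\le m$. This half is essentially immediate once Lemma~\ref{lem.4b} is available.

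The upper bound $m\le s(\mu,a)$ requires an argument. First I would record the elementary observation that the rows of $T$ containing the value $h$ are exactly the rows $i$ with $T(i,\mu_i)=h$: if $h$ appears at $(i,j)$, then $T(i,\mu_i)\ge T(i,j)=h$ and $T(i,\mu_i)\le h$, so $T(i,\mu_i)=h$; the converse is trivial. It therefore suffices to exhibit an occurrence of $h$ in some row of index at most $s(\mu,a)$, for then that row has its last entry equal to $h$, and $m$ is at most that index.

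I would then argue by contradiction, assuming every occurrence of $h$ lies in a row of index strictly greater than $s:=s(\mu,a)$. The two structural facts are: first, since the columns of $T$ are strictly increasing, $h$ occurs at most once in each column, so its $a_h$ occurrences lie in $a_h$ distinct columns; second, if $h$ occurs at $(i,j)$ with $i>s$, then $\mu_{s+1}\ge\mu_i\ge j$, using that $\mu$ is non-increasing and $(i,j)\in D_\mu$. The second fact says every column containing an $h$ has index at most $\mu_{s+1}$, so there are at most $\mu_{s+1}$ such columns; combined with the first fact this forces $a_h\le\mu_{s+1}$, contradicting $\mu_{s+1}<a_h$ from (\ref{eq:01s}). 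Hence $h$ occurs in some row of index $\le s$, giving $m\le s$.

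The two halves together give the claim. Since the lower bound is a direct appeal to Lemma~\ref{lem.4b}, the only real content is the upper bound, and there the main obstacle is spotting the right counting invariant: the $a_h$ copies of $h$ occupy distinct columns, while the rows past $s(\mu,a)$ are too short to supply $a_h$ distinct columns. Once this tension between ``distinct columns'' and ``short rows'' is identified, the contradiction is a one-line count via (\ref{eq:01s}), so I do not anticipate any technical difficulty beyond locating this observation.
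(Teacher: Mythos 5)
Your proof is correct and follows essentially the same route as the paper: the lower bound is the same direct appeal to Lemma~\ref{lem.4b}, and your upper bound uses the same counting fact (the $a_h$ occurrences of $h$ lie in distinct columns, all within rows of length at most that of the topmost row ending in $h$), merely packaged as a contradiction against $\mu_{s+1}<a_h$ rather than as the paper's one-line direct inequality $a_h\leq\mu_q$, which yields $q\leq s(\mu,a)$ from the definition of $s$.
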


\begin{proof}
Write
$q=\min\{i\mid T(i,\mu_i)=h\}$.
By Lemma~\ref{lem.4b},
we have $l(\mu,a)\leq q$.
Since $a_h=|\{(i,j)\in D_\mu\mid T(i,j)=h\}|\leq\mu_q$, we have
$q\leq s(\mu,a)$.
\end{proof}

We will show in Theorem~\ref{thm.1a}, Lemma~\ref{lem.16} and
Theorem~\ref{thm.3} that equality can be achieved in both of the
inequalities above. In Section~\ref{sec:1}, we give an algorithm to
construct $T\in\STab(\mu,a)$ such that $\min\{i\mid
T(i,\mu_i)=h\}=s(\mu,a)$. In Sections~\ref{sec:2} and \ref{sec:3},
we give an algorithm to construct $T\in\STab(\mu,a)$ such that
$\min\{i\mid T(i,\mu_i)=h\}=l(\mu,a)$.

Finally, we define a partial order on $\STab(\mu,a)$ and a partition
$\rho(\mu,a)\vdash n-a_h$ as follows. We write $\mu\rhd\lambda$ to
mean $\mu\unrhd\lambda$ and $\mu\neq\lambda$.

\begin{dfn}
Let $\mu=(\mu_1,\mu_2,\ldots,\mu_k)\vdash n$ and
$a=(a_1,a_2,\ldots,a_h)\vDash n$ satisfy $\mu\unrhd\lambda(a)$. For
$T$, $S\in\STab(\mu,a)$, let
\begin{align}
\tau^{(p)}&=(|\{j\mid T(i,j)\leq p\}|)_{i=1}^k,\label{def.eq.1}\\
\sigma^{(p)}&=(|\{j\mid S(i,j)\leq p\}|)_{i=1}^k\label{def.eq.2}
\end{align}
for all $p$ with $1\leq p\leq h$. We define that $S\leq T$ if,
either $T=S$ or, $\tau^{(h)}=\sigma^{(h)}$,
$\tau^{(h-1)}=\sigma^{(h-1)},\ldots,\tau^{(p+1)}=\sigma^{(p+1)}$,
$\tau^{(p)}\rhd\sigma^{(p)}$
for some $1\leq p\leq h$.
\end{dfn}

Since the relation $\unrhd$ is a partial order, we see that
$(\STab(\mu,a),\leq)$ is a partially ordered set.

Alternatively the partial order $\leq$ on $\STab(\mu,a)$ can be
defined recursively as follows: for $T,S\in\STab(\mu,a)$, define
$\tau$ and $\sigma$ by
\begin{align}
T^{-1}(\{1,\dots,h-1\})&=D_{\tau},\label{Tt}\\
S^{-1}(\{1,\dots,h-1\})&=D_{\sigma}\label{Ss},
\end{align}
respectively. We define $S\leq T$ if, either $\tau\rhd\sigma$, or
$\tau=\sigma$ and $S|_{D_\sigma}\leq T|_{D_\sigma}$.

\begin{dfn}
Let $\mu=(\mu_1,\ldots,\mu_k)\vdash n$ and $a=(a_1,\dots,a_h)\vDash
n$ satisfy $\mu\unrhd\lambda(a)$. Define
$\rho(\mu,a)=(\rho_1,\dots,\rho_{k-1})\vdash n-a_h$ by setting
\begin{align*}
\rho_i&=\begin{cases} \mu_i&\text{ if } 1\leq i<s,\\
\mu_s-(a_h-\mu_{s+1})&\text{ if }i=s,\\
\mu_{i+1}&\text{ if }s<i\leq k-1,
\end{cases}
\end{align*}
where $s=s(\mu,a)$.
\end{dfn}

\section{The greatest element of $\STab(\mu,a)$}\label{sec:1}

\begin{lem}\label{lem.8b}
Let $\mu=(\mu_1,\ldots,\mu_k)\vdash n$ and $a=(a_1,\dots,a_h)\vDash
n$ satisfy $\mu\unrhd\lambda(a)$. Then $\rho(\mu,a)$ is the greatest
element of $\mathcal{B}(\mu,a)$.
\end{lem}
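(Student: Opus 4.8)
The plan is to read ``greatest'' with respect to the dominance order $\unrhd$; this is forced, since two partitions of $n-a_h$ comparable under $\preceq$ must coincide, so $\preceq$ would make $\mathcal{B}(\mu,a)$ an antichain. Thus two things must be shown: that $\rho:=\rho(\mu,a)$ lies in $\mathcal{B}(\mu,a)$, and that $\rho\unrhd\sigma$ for every $\sigma\in\mathcal{B}(\mu,a)$. The computational backbone of both parts is an explicit formula for the partial sums of $\rho$, which I would record first: writing $s=s(\mu,a)$, one checks directly from the definition that $\sum_{i=1}^{j}\rho_i=\sum_{i=1}^{j}\mu_i$ for $j<s$, while $\sum_{i=1}^{j}\rho_i=\sum_{i=1}^{j+1}\mu_i-a_h$ for $s\le j\le k-1$ (the sum telescopes, using $\rho_s=\mu_s-a_h+\mu_{s+1}$ and $\rho_i=\mu_{i+1}$ for $i>s$).

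For membership I would verify the four defining conditions in turn. That $\rho$ is a partition of $n-a_h$ with $\rho\preceq\mu$ follows by inspection, using $\mu_s\ge a_h>\mu_{s+1}$ from (\ref{eq:01s}) to see that $\rho$ is non-increasing and that exactly $a_h$ boxes are deleted. Total disconnectedness of $\mu/\rho$ amounts to $\rho_i\ge\mu_{i+1}$ for all $i$; this is clear for $i\ne s$ and, for $i=s$, reduces again to $\mu_s\ge a_h$. The one substantive condition is $\rho\unrhd\lambda(a')$. Here I would set $\lambda=\lambda(a)$, $q=q(a)$, and note that $\lambda(a')$ is obtained from $\lambda$ by deleting the part in position $q$ (whose value is $a_h$), so $\sum_{i=1}^{j}\lambda(a')_i$ equals $\sum_{i=1}^{j}\lambda_i$ for $j<q$ and $\sum_{i=1}^{j+1}\lambda_i-a_h$ for $j\ge q$. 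Comparing these with the partial sums of $\rho$ and invoking $\mu\unrhd\lambda$ splits into four cases according to the position of $j$ relative to $s$ and $q$; the two mixed cases ($s\le j<q$ and $q\le j<s$) are closed using $\lambda_{j+1}\ge a_h$ when $j+1\le q$ and $\lambda_{j+1}<a_h$ when $j+1>q$, both following from (\ref{eq:01j}).

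For maximality, fix $\sigma\in\mathcal{B}(\mu,a)$; I would show $\sum_{i=1}^{j}\rho_i\ge\sum_{i=1}^{j}\sigma_i$ for every $j$ using only $\sigma\preceq\mu$ and the total disconnectedness of $\mu/\sigma$ (notably, the hypothesis $\sigma\unrhd\lambda(a')$ is not needed). For $j<s$ this is immediate from $\sigma_i\le\mu_i$. For $s\le j\le k-1$, substituting the partial-sum formula for $\rho$ and using $|\mu|-|\sigma|=a_h$ shows the inequality is equivalent to the assertion that the number of boxes of $\mu/\sigma$ in rows $>j$ is at most $\mu_{j+1}$. The key observation is that ``totally disconnected'' means precisely that $\mu/\sigma$ is a horizontal strip, having at most one box per column; since every box of $\mu/\sigma$ in a row $>j$ lies in one of the columns $1,\dots,\mu_{j+1}$, there are at most $\mu_{j+1}$ of them, as required.

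The main obstacle, I expect, is the inequality $\rho\unrhd\lambda(a')$ in the membership step: it is the only place where $\mu\unrhd\lambda(a)$ enters essentially, and it requires tracking the interplay of the two distinguished indices $s=s(\mu,a)$ and $q=q(a)$, which need not be comparable. The maximality step is conceptually clean once total disconnectedness is recognized as the horizontal-strip condition, and the remaining verifications are routine bookkeeping with the partial-sum formula.
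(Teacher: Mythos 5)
Your proposal is correct and follows essentially the same route as the paper: the same partial-sum formula for $\rho(\mu,a)$, the same case analysis on the relative position of $j$, $s(\mu,a)$ and $q(a)$ to get $\rho\unrhd\lambda(a')$ from $\mu\unrhd\lambda(a)$ and (\ref{eq:01j}), and the same maximality inequality $\sum_{i=j+1}^k(\mu_i-\sigma_i)\leq\mu_{j+1}$, which the paper derives by telescoping $\sigma_i\geq\mu_{i+1}$ and you rephrase as counting columns of a horizontal strip. Your version is marginally cleaner in that it avoids introducing the auxiliary index $r=\min\{i\mid\sigma_i<\mu_i\}$, but the substance is identical.
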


\begin{proof}
Write $\rho=\rho(\mu,a)$ and $s=s(\mu,a)$. By (\ref{eq:01s}), we
have $\mu_s\geq a_h>\mu_{s+1}$. Thus $\rho\vdash n-a_h$ and
$\mu_s>\rho_s\geq\mu_{s+1}$. So $\mu_i\geq\rho_i\geq\mu_{i+1}$ for
all $i$ with $1\leq i\leq k$. This implies that $\rho\preceq\mu$ and
$\mu/\rho$ is totally disconnected.

Next, we show that $\rho\unrhd\lambda(a')$. Write
$\lambda(a)=(\lambda_1,\dots,\lambda_h)$,
$\lambda'=(\lambda_1',\dots,\lambda_{h-1}')=\lambda(a')$ and
$q=q(a)$. Then $\lambda_q=a_h$ and
\[
\lambda'=(\lambda_1,\lambda_2,\ldots,\lambda_{q-1},\lambda_{q+1},\lambda_{q+2},\ldots,\lambda_h)\vdash
n-\lambda_q.
\]
Observe
\begin{align}
\sum_{i=1}^j\rho_i&=
\begin{cases}
\sum_{i=1}^j\mu_i&\text{if $1\leq j<s$,}\\
\sum_{i=1}^{j+1}\mu_i-\lambda_q&\text{if $s\leq j\leq k-1$}
\end{cases}
\notag\nextgeq
\begin{cases}
\sum_{i=1}^j\lambda_i&\text{if $1\leq j<s$,}\\
\sum_{i=1}^{j+1}\lambda_i-\lambda_q&\text{if $s\leq j\leq k-1$}
\end{cases}
\label{8b.eq.0}
\end{align}
since $\mu\unrhd\lambda(a)$.

\textbf{Case 1.} $q<s$. By (\ref{8b.eq.0}), we have
\begin{align*}
\sum_{i=1}^j\rho_i&\geq
\begin{cases}
\sum_{i=1}^j\lambda_i&\text{if $1\leq j<q$,}\\
\sum_{i=1}^{q-1}\lambda_i+\sum_{i=q+1}^{j+1}\lambda_i
+\lambda_q-\lambda_{j+1}&\text{if $q\leq j<s$,}\\
\sum_{i=1}^{j+1}\lambda_i-\lambda_q&\text{if $s\leq j\leq k-1$}
\end{cases}
\nextgeq
\begin{cases}
\sum_{i=1}^j\lambda_i&\text{if $1\leq j<q$,}\\
\sum_{i=1}^{q-1}\lambda_i+\sum_{i=q+1}^{j+1}\lambda_i&\text{if $q\leq j<s$,}\\
\sum_{i=1}^{j+1}\lambda_i-\lambda_q&\text{if $s\leq j\leq k-1$}
\end{cases}
\nexteq \sum_{i=1}^{j}\lambda'_i.
\end{align*}

\textbf{Case 2.} $s\leq q$. By (\ref{8b.eq.0}), we have
\begin{align*}
\sum_{i=1}^j\rho_i&\geq
\begin{cases}
\sum_{i=1}^j\lambda_i&\text{if $1\leq j<s$,}\\
\sum_{i=1}^{j}\lambda_i+\lambda_{j+1}-\lambda_q&\text{if $s\leq j<\min\{q,k\}$,}\\
\sum_{i=1}^{j+1}\lambda_i-\lambda_q&\text{if $\min\{q,k\}\leq j\leq
k$}
\end{cases}
\nextgeq
\begin{cases}
\sum_{i=1}^{j}\lambda_i&\text{if $1\leq j<\min\{q,k\}$,}\\
\sum_{i=1}^{j+1}\lambda_i-\lambda_q&\text{if $q\leq j\leq k$}
\end{cases}
\nextgeq \sum_{i=1}^j\lambda'_i.
\end{align*}
Thus we have $\rho\in\mathcal{B}(\mu,a)$.

It remains to show that $\rho\unrhd\tau$ for all
$\tau\in\mathcal{B}(\mu,a)$. Let $\tau\in\mathcal{B}(\mu,a)$ with
$\tau\neq\rho$ and
\begin{equation}\label{8c.eq.5}
r=\min\{i\mid 1\leq i\leq k,\;\tau_i<\mu_i\}.
\end{equation}
Then
\begin{equation}\label{8c.eq.4}
1\leq r\leq s
\end{equation}
and
\begin{equation}\label{8c.eq.2}
\sum_{i=r}^k(\mu_i-\tau_i)= a_h.
\end{equation}
By the definition of $\mathcal{B}(\mu,a)$, we have
\begin{align}\label{8c.eq.1}
\mu_{i}\geq \tau_i\geq\mu_{i+1}
\end{align}
for all $i$ with $1\leq i\leq k$.

If $1\leq j<s$, then
\[
\sum_{i=1}^j\rho_i=\sum_{i=1}^j\mu_i\geq\sum_{i=1}^j\tau_i.
\]

If $s\leq j\leq k$, then
\begin{align*}
\sum_{i=1}^j\rho_i&=\sum_{i=1}^{j+1}\mu_i-a_h\nexteq
\sum_{i=1}^{j+1}\mu_i-\sum_{i=r}^k(\mu_i-\tau_i)&&\text{(by
(\ref{8c.eq.2}))} \nexteq
\sum_{i=1}^{r-1}\mu_i+\sum_{i=r}^{j+1}\mu_i-\sum_{i=r}^k\mu_i+\sum_{i=r}^k\tau_i&&\text{(by
(\ref{8c.eq.4}))} \nexteq
\sum_{i=1}^{j}\tau_i+\sum_{i=j+1}^k\tau_i-\sum_{i=j+2}^k\mu_i
&&\text{(by (\ref{8c.eq.5}))} \nexteq
\sum_{i=1}^{j}\tau_i+\sum_{i=j+1}^k(\tau_i-\mu_{i+1})\nextgeq
\sum_{i=1}^{j}\tau_i&&\text{(by (\ref{8c.eq.1})).}
\end{align*}
Therefore, $\rho\unrhd\tau$.
\end{proof}

\begin{thm}\label{thm.1a}
Given $\mu\vdash n$ and $a=(a_1,a_2,\ldots,a_h)\vDash n$ such that
$\mu\unrhd\lambda(a)$, define $\rho^i$ and $a^i$ inductively by
setting $\rho^0=\mu$, $a^0=a$, and for $1\leq i\leq h$,
\begin{align*}
\rho^{i}&=\rho(\rho^{i-1},a^{i-1})\vdash \sum_{j=1}^{h-i}a_j,\\
a^{i}&=(a^{i-1})'\vDash \sum_{j=1}^{h-i}a_j.
\end{align*}
Define a tableau $T$ of shape $\mu$ and weight $a$ by
\begin{equation}\label{t1.eq.1a}
T(p,q)=h-i\text{ if }(p,q)\in D_{\rho^i/\rho^{i+1}}.
\end{equation}
Then $T$
is the greatest element of $\STab(\mu,a)$.
\end{thm}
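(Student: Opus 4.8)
The plan is to prove the statement in three stages: first that the recursion is well defined, then that $T\in\STab(\mu,a)$, and finally that $T$ dominates every element of $\STab(\mu,a)$.

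First I would check that Lemma~\ref{lem.8b} is applicable at every step, by verifying inductively that $\rho^{i}\unrhd\lambda(a^{i})$. For $i=0$ this is the hypothesis $\mu\unrhd\lambda(a)$. Assuming $\rho^{i-1}\unrhd\lambda(a^{i-1})$, Lemma~\ref{lem.8b} gives $\rho^{i}=\rho(\rho^{i-1},a^{i-1})\in\mathcal{B}(\rho^{i-1},a^{i-1})$; in particular $\rho^{i}\unrhd\lambda((a^{i-1})')=\lambda(a^{i})$, closing the induction. The membership $\rho^{i}\in\mathcal{B}(\rho^{i-1},a^{i-1})$ also records the two facts I will use repeatedly: each skew shape $\rho^{i-1}/\rho^{i}$ is totally disconnected (hence a horizontal strip), so that $\rho^{i-1}_{j}\geq\rho^{i}_{j}\geq\rho^{i-1}_{j+1}$ for all $j$, and the partitions are nested, $\mu=\rho^{0}\succeq\rho^{1}\succeq\cdots\succeq\rho^{h}=\varnothing$.

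Next I would verify $T\in\STab(\mu,a)$. The weight is immediate: the cells carrying the value $h-i$ are exactly those of $D_{\rho^{i}/\rho^{i+1}}$, of which there are $\bigl(\sum_{j=1}^{h-i}a_j\bigr)-\bigl(\sum_{j=1}^{h-i-1}a_j\bigr)=a_{h-i}$. For the tableau conditions, fix a cell $(p,q)$ and let $i(p,q)$ be the unique index with $\rho^{i+1}_{p}<q\leq\rho^{i}_{p}$, so that $T(p,q)=h-i(p,q)$; equivalently $i(p,q)=\max\{i\mid\rho^{i}_{p}\geq q\}$. Along a row, $i(p,q)$ is non-increasing in $q$ because each $\rho^{i}_{p}$ is, so $T(p,q)\leq T(p,q+1)$. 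Down a column, writing $i'=i(p+1,q)$ we have $\rho^{i'}_{p+1}\geq q$, and total disconnectedness of $\rho^{i'}/\rho^{i'+1}$ gives $\rho^{i'+1}_{p}\geq\rho^{i'}_{p+1}\geq q$, whence $i(p,q)\geq i'+1$ and $T(p,q)<T(p+1,q)$. Thus $T$ is semistandard.

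Finally I would prove maximality by induction on $h=h(a)$. Given any $S\in\STab(\mu,a)$, let $\sigma$ be the partition defined by $S^{-1}(\{1,\dots,h-1\})=D_{\sigma}$. Since the cells of value $h$ occupy the right ends of rows and are column strict, $\mu/\sigma$ is a totally disconnected skew shape with $a_h$ cells; moreover $S|_{D_\sigma}\in\STab(\sigma,a')$ forces $\sigma\unrhd\lambda(a')$ by Theorem~\ref{lem.15}. Hence $\sigma\in\mathcal{B}(\mu,a)$, and Lemma~\ref{lem.8b} yields $\rho^{1}=\rho(\mu,a)\unrhd\sigma$. For $T$ itself the value-$h$ cells are $D_{\mu/\rho^1}$, so $T^{-1}(\{1,\dots,h-1\})=D_{\rho^1}$. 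If $\rho^{1}\rhd\sigma$ then $S\leq T$ by the recursive definition of the order. If $\rho^{1}=\sigma$, I would observe that the algorithm applied to the reduced data $(\rho^{1},a')$ produces precisely the partitions $\rho^{1},\rho^{2},\dots,\rho^{h}$ and hence exactly the tableau $T|_{D_{\rho^1}}$; by the induction hypothesis this is the greatest element of $\STab(\rho^1,a')$, so $S|_{D_\sigma}\leq T|_{D_{\rho^1}}$, and again $S\leq T$. The main obstacle I anticipate lies in this last stage: checking cleanly that $\sigma\in\mathcal{B}(\mu,a)$ (the total disconnectedness of $\mu/\sigma$ together with $\sigma\unrhd\lambda(a')$) and that the restriction $T|_{D_{\rho^1}}$ is genuinely the tableau the algorithm builds for $(\rho^{1},a')$; once these are secured, Lemma~\ref{lem.8b} and the recursive definition of $\leq$ finish the argument.
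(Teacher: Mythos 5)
Your proof is correct and follows essentially the same route as the paper's: induction on $h$, with Lemma~\ref{lem.8b} supplying $\rho^{1}=\rho(\mu,a)\unrhd\sigma$ for any competitor $S$, and the recursive definition of $\leq$ closing the induction via the identification of $T|_{D_{\rho^1}}$ with the tableau the construction produces for $(\rho^1,a')$. The only differences are cosmetic: you verify semistandardness directly from the nested totally disconnected strips rather than inductively, and you spell out the membership $\sigma\in\mathcal{B}(\mu,a)$ (total disconnectedness of $\mu/\sigma$ and $\sigma\unrhd\lambda(a')$) that the paper leaves implicit when it invokes Lemma~\ref{lem.8b}.
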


\begin{proof}
We prove the assertion by induction on $h$. Suppose first $h=1$.
Then $\STab(\mu,a)$ consists of a single element $T$, so that the
assertion trivially holds.

Next suppose $h>1$. Assume that the assertion holds for $h-1$.
Set $\nu=\rho^1$ and $b=a^1$.
Since $\rho^1\in\mathcal{B}(\mu,a)$ by Lemma~\ref{lem.8b},
we have $\nu\unrhd\lambda(b)$. Define $\nu^i$ and $b^i$ inductively by
setting $\nu^0=\nu$, $b^0=b$, and for $1\leq i<h$,
\begin{align*}
\nu^{i}&=\rho(\nu^{i-1},b^{i-1})\vdash \sum_{j=1}^{h-1-i}b_j,\\
b^{i}&=(b^{i-1})'\vDash \sum_{j=1}^{h-1-i}b_j.
\end{align*}
Define a tableau $T'$ of shape $\nu$ and weight $b$ by
\[
T'(p,q)=h-(i+1)\text{ if }(p,q)\in D_{\nu^{i}/\nu^{i+1}}.
\]
By the inductive hypothesis,
\begin{align}
T'&\in\STab(\nu,b),\label{t1.eq.4}\\
T'&\geq S'\text{ for all }S'\in\STab(\nu,b).\label{t1.eq.3}
\end{align}
It is easy to show that $b^i=a^{i+1}$ and $\nu^i=\rho^{i+1}$ by induction on $i$,
and the latter implies $T|_{D_{\nu}}=T'$.
Then by (\ref{t1.eq.4}) and the fact that
$\mu/\nu$ is totally disconnected, we obtain
$T\in\STab(\mu,a)$.

It remains to show that $T\geq S$ for all $S\in\STab(\mu,a)$.
Let $S\in\STab(\mu,a)$.
Define a partition $\sigma$ by (\ref{Ss}).
By Lemma~\ref{lem.8b}, we have $\nu\unrhd\sigma$.
If $\nu\rhd\sigma$, then $T\geq S$. If $\nu=\sigma$,
then $T|_{D_\nu}=T'\geq S|_{D_\nu}$ by (\ref{t1.eq.3}), hence $T\geq S$.
\end{proof}

From Theorem~\ref{thm.1a}, we obtain a tableau $T\in\STab(\mu,a)$.

\begin{alg}\label{a.2}
\textbf{Input}: $\mu\vdash n$ and
$a\vDash n$ such that $\mu\unrhd\lambda(a)$.\\
\textbf{Output}: $T\in\STab(\mu,a)$.\\
\textbf{Initialization}: $\nu:=\mu$, $b:=a$.\\
\textbf{while} $h(b)>1$ \textbf{do}\\
\begin{tabular}{c|l}
&\;$T(i,j):=h(b)$ \textbf{where} $(i,j)\in D_{\nu/\rho(\nu,b)}$.\\
&\;$\nu\leftarrow\rho(\nu,b)$, $b\leftarrow b'$.\\
\end{tabular}\\
\textbf{end}\\
$T(1,j):=1$ \textbf{where} $1\leq j\leq \nu_1$.\\
Output $T$.
\end{alg}

\begin{exam}
Let $\mu=(4,4,1,1)\vdash 10$ and $a=(1,3,2,2,2)\vDash 10$. Then a
tableau $T\in\STab(\mu,a)$ is obtained via Algorithm~\ref{a.2}.
\[
\begin{tabular}{|c|c|c|c|}
\hline
$\nu$ & $b$ & $\rho(\nu,b)$ & $T$  \\
\hline $(4,4,1,1)$ & $(1,3,2,2,2)$ & $(4,3,1)$ & $T(2,4)=T(4,1)=5$ \\
\hline $(4,3,1)$& $(1,3,2,2)$ & $(4,2)$ & $T(2,3)=T(3,1)=4$ \\
\hline $(4,2)$& $(1,3,2)$ & $(4)$ & $T(2,1)=T(2,2)=3$ \\
\hline $(4)$& $(1,3)$ & $(1)$ & $T(1,2)=T(1,3)=T(1,4)=2$ \\
\hline $(1)$& $(1)$ &  & $T(1,1)=1$ \\
\hline
\end{tabular}
\]
Thus
\[
T=\begin{array}{cccc}
1&2&2&2\\
3&3&4&5\\
4&&&\\
5
\end{array}
\]
is the greatest element of $\STab(\mu,a)$.
\end{exam}

\section{Removable boxes}\label{sec:2}

Throughout this section, let $\mu=(\mu_1,\mu_2,\ldots,\mu_k)\vdash
n$ and $a=(a_1,a_2,\ldots,a_h)\vDash n$. Let
$\lambda=\lambda(a)=(\lambda_1,\lambda_2,\ldots,\lambda_h)$,
$q=q(a)$ and $l=l(\mu,a)$. We assume $\mu\unrhd\lambda$.

\begin{lem}\label{lem.8a}
Assume $a_h\geq 2$. For $i\in R{(\mu,a)}$ with $i\leq
s(\mu^{(i)},\tilde{a})$, we have
$\rho(\mu^{(i)},\tilde{a})\in\mathcal{B}(\mu,a)$.
\end{lem}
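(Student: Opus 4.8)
The plan is to derive everything from Lemma~\ref{lem.8b} applied to the pair $(\mu^{(i)},\tilde a)$, rather than verifying the three defining conditions of $\mathcal B(\mu,a)$ from scratch. First I would check that this pair is admissible for Lemma~\ref{lem.8b}: since $i\in R(\mu,a)$, Definition~\ref{def.1} gives $\mu_i>\mu_{i+1}$, so $\mu^{(i)}$ is a genuine partition of $n-1$, and it also gives $\mu^{(i)}\unrhd\lambda(\tilde a)$. Hence Lemma~\ref{lem.8b} applies and tells us that $\eta:=\rho(\mu^{(i)},\tilde a)$ is the greatest element of $\mathcal B(\mu^{(i)},\tilde a)$; in particular $\eta\in\mathcal B(\mu^{(i)},\tilde a)$.

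Next I would transfer the properties of membership in $\mathcal B(\mu^{(i)},\tilde a)$ to those required for $\mathcal B(\mu,a)$. The key bookkeeping observation is that, because $a_h\geq 2$, we have $\tilde a=(a_1,\dots,a_{h-1},a_h-1)$, so $(\tilde a)'=a'$ and $(n-1)-(a_h-1)=n-a_h$. Therefore the two conditions $\eta\vdash n-a_h$ and $\eta\unrhd\lambda((\tilde a)')=\lambda(a')$ are inherited verbatim from $\eta\in\mathcal B(\mu^{(i)},\tilde a)$. The containment $\eta\preceq\mu$ is likewise immediate, since $\eta\preceq\mu^{(i)}$ and $(\mu^{(i)})_j\leq\mu_j$ for every $j$.

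The only condition that does not transfer for free is that $\mu/\eta$ be totally disconnected, namely $\eta_j\geq\mu_{j+1}$ for all relevant $j$; this is the step I expect to be the main obstacle. For each index $j$ with $j+1\neq i$ it follows at once from the total disconnectedness of $\mu^{(i)}/\eta$, because $(\mu^{(i)})_{j+1}=\mu_{j+1}$ there. The single problematic index is $j=i-1$ (vacuous when $i=1$), where $\mu^{(i)}/\eta$ only yields the weaker bound $\eta_{i-1}\geq(\mu^{(i)})_i=\mu_i-1$. This is precisely where the hypothesis $i\leq s(\mu^{(i)},\tilde a)$ is used: it gives $i-1<s(\mu^{(i)},\tilde a)$, so by the definition of $\rho(\mu^{(i)},\tilde a)$ the $(i-1)$-st part is left unchanged, $\eta_{i-1}=(\mu^{(i)})_{i-1}=\mu_{i-1}\geq\mu_i$. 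This supplies the missing inequality and completes the verification that $\eta\in\mathcal B(\mu,a)$.
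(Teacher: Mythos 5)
Your proposal is correct and follows essentially the same route as the paper's own proof: apply Lemma~\ref{lem.8b} to the pair $(\mu^{(i)},\tilde a)$, note that $a_h\geq 2$ makes $\tilde a'=a'$ so that membership in $\mathcal{B}(\mu^{(i)},\tilde a)$ transfers, and reduce total disconnectedness of $\mu/\rho$ to the single inequality $\rho_{i-1}\geq\mu_i$, which the hypothesis $i\leq s(\mu^{(i)},\tilde a)$ supplies via $\rho_{i-1}=\mu^{(i)}_{i-1}=\mu_{i-1}$. Your write-up is somewhat more explicit about which conditions transfer verbatim and why $j=i-1$ is the only problematic index, but the argument is identical in substance.
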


\begin{proof}
Write $\rho=\rho(\mu^{(i)},\tilde{a})$. Since $i\in R(\mu,a)$, we
have $\mu^{(i)}\unrhd\lambda(\tilde{a})$. Then by
Lemma~\ref{lem.8b}, $\rho\in\mathcal{B}(\mu^{(i)},\tilde{a})$. Since
$a_h\geq2$, this implies $\rho\unrhd\lambda(\tilde{a}')
=\lambda(a')$. To prove $\rho\in\mathcal{B}(\mu,a)$, it remains to
show that $\mu/\rho$ is totally disconnected. Since $\mu^{(i)}/\rho$
is totally disconnected, it is enough to show $\rho_{i-1}\geq\mu_i$.
Since $i-1<s(\mu^{(i)},\tilde{a})$, we obtain
$\rho_{i-1}=\mu^{(i)}_{i-1}=\mu_{i-1}\geq\mu_i$.
\end{proof}

\begin{lem}\label{lem.4a}
Assume $a_h\geq 2$. Then $r\leq s(\mu,a)$ if and only if $r\leq s(\mu^{(r)},\tilde{a})$.
\end{lem}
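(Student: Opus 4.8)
The plan is to analyze both inequalities directly in terms of the definitions of $s(\mu,a)$ and $s(\mu^{(r)},\tilde a)$. Recall that $s(\mu,a)=\max\{i\mid \mu_i\geq a_h\}$ and, since $a_h\geq 2$ so that $\tilde a=(a_1,\dots,a_h-1)$ has last part $a_h-1$, we have $s(\mu^{(r)},\tilde a)=\max\{i\mid \mu^{(r)}_i\geq a_h-1\}$. The entries of $\mu^{(r)}$ agree with those of $\mu$ except that $\mu^{(r)}_r=\mu_r-1$. So the whole statement is a comparison between the threshold $a_h$ for $\mu$ and the threshold $a_h-1$ for the perturbed partition $\mu^{(r)}$.

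First I would prove the forward direction. Assume $r\leq s(\mu,a)$, which means $\mu_r\geq a_h$ by the monotonicity implicit in the definition of $s$ (if $r\leq s(\mu,a)$ then $\mu_r\geq\mu_{s(\mu,a)}\geq a_h$). Then $\mu^{(r)}_r=\mu_r-1\geq a_h-1$, so the box at row $r$ survives the lowered threshold in $\mu^{(r)}$; hence $r\leq s(\mu^{(r)},\tilde a)$. This direction is essentially immediate. For the converse, assume $r\leq s(\mu^{(r)},\tilde a)$, i.e. $\mu^{(r)}_r\geq a_h-1$, which gives $\mu_r-1\geq a_h-1$ and therefore $\mu_r\geq a_h$; this forces $r\leq s(\mu,a)$ by the definition of $s(\mu,a)$ as the largest index with $\mu_i\geq a_h$. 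So both directions reduce to the single equivalence $\mu_r\geq a_h \iff \mu_r-1\geq a_h-1$, which is a triviality about integers.

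The one point requiring a little care is that I am using ``$r\leq s(\mu,a)$ iff $\mu_r\geq a_h$,'' which is valid precisely because $\mu$ is non-increasing: $s(\mu,a)$ is the last index whose part is at least $a_h$, so the indices $i$ with $\mu_i\geq a_h$ form exactly the initial segment $\{1,\dots,s(\mu,a)\}$. The same monotonicity applies to $\mu^{(r)}$; here one should note that $\mu^{(r)}$ need not be a partition in general, but in the situation of this section $\mu^{(r)}$ arises from a removable box, so $\mu_r>\mu_{r+1}$ guarantees $\mu^{(r)}$ is again non-increasing and the initial-segment description of $s(\mu^{(r)},\tilde a)$ holds. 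I would make this observation explicit so that the ``iff $\mu^{(r)}_r\geq a_h-1$'' reformulation is justified.

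I do not expect any real obstacle: the lemma is a bookkeeping statement asserting that lowering the final part of the weight by one and simultaneously removing one box from row $r$ of the shape leaves the comparison $r\leq s$ unchanged, and this is exactly the arithmetic identity above. The only thing I would be vigilant about is the edge behavior when $\mu_r=a_h$ exactly (so $r=s(\mu,a)$ is the boundary case) and when $r$ is large enough that $\mu_{r+1}$ enters, but since the reduction collapses everything to $\mu_r\geq a_h$ these cases are absorbed automatically.
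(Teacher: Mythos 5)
Your proof is correct and takes exactly the route the paper intends: its own proof of this lemma is just ``Immediate from the definition,'' and your argument is the definitional unwinding, reducing both directions to $\mu_r\geq a_h\iff\mu_r-1\geq a_h-1$. Your explicit caveat that the converse direction needs $\mu^{(r)}$ to be non-increasing (equivalently $\mu_r>\mu_{r+1}$, which holds since $r$ is always drawn from $R(\mu,a)$ where the lemma is applied) is the one genuinely delicate point, and you handle it correctly.
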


\begin{proof}
Immediate from the definition.
\end{proof}

\begin{lem}\label{lem.16}
Define $\rho\vdash n-a_h$ by
\begin{align*}
\rho=\begin{cases}\mu^{(l)}&\text{ if $a_h=1$,}\\
\rho(\mu^{(l)},\tilde{a})&\text{ if $a_h\geq2$}.
\end{cases}
\end{align*}
Then $\mu/\rho$ is totally disconnected, $\rho_l<\mu_l$ and
$\rho\unrhd\lambda(a')$.
\end{lem}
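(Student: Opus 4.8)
The plan is to treat the two cases $a_h=1$ and $a_h\geq 2$ separately, exactly as the definition of $\rho$ does. In both cases I will use that $l=l(\mu,a)=\min R(\mu,a)$ lies in $R(\mu,a)$, so by Definition~\ref{def.1} we have $\mu_l>\mu_{l+1}$ and $\mu^{(l)}\unrhd\lambda(\tilde a)$; these two facts do almost all of the work.

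First I would dispose of the case $a_h=1$, where $\rho=\mu^{(l)}$. Here $\tilde a=a'$, so $\lambda(\tilde a)=\lambda(a')$ and the relation $\mu^{(l)}\unrhd\lambda(\tilde a)$ immediately gives $\rho\unrhd\lambda(a')$. Since $\rho$ agrees with $\mu$ except that $\rho_l=\mu_l-1$, the inequality $\rho_l<\mu_l$ is clear, and $D_{\mu/\rho}$ is the single box $(l,\mu_l)$; total disconnectedness reduces to $\rho_l\geq\mu_{l+1}$, which follows from $\mu_l>\mu_{l+1}$. Thus all three conclusions are direct.

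Next I would turn to $a_h\geq 2$, where $\rho=\rho(\mu^{(l)},\tilde a)$. My first step is to verify the hypothesis of Lemma~\ref{lem.8a} at $i=l$, namely $l\leq s(\mu^{(l)},\tilde a)$: Lemma~\ref{lem.5} gives $l\leq s(\mu,a)$, and Lemma~\ref{lem.4a} converts this into $l\leq s(\mu^{(l)},\tilde a)$. Lemma~\ref{lem.8a} then yields $\rho\in\mathcal{B}(\mu,a)$, and unwinding the definition of $\mathcal{B}(\mu,a)$ hands me two of the three conclusions for free, namely $\rho\unrhd\lambda(a')$ and $\mu/\rho$ totally disconnected.

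The remaining point, $\rho_l<\mu_l$, is where the real work lies. Writing $s'=s(\mu^{(l)},\tilde a)$ and using the explicit formula for $\rho(\mu^{(l)},\tilde a)$, I split on $l<s'$ versus $l=s'$ (recall $l\leq s'$). If $l<s'$ then $\rho_l=\mu^{(l)}_l=\mu_l-1<\mu_l$. If $l=s'$ then $\rho_l=\mu^{(l)}_{s'}-((a_h-1)-\mu^{(l)}_{s'+1})=\mu_l-a_h+\mu_{l+1}$, so I must show $\mu_{l+1}<a_h$. This is exactly where the definition of $s'$ enters: $l=s'=\max\{i\mid \mu^{(l)}_i\geq a_h-1\}$ forces $\mu_{l+1}=\mu^{(l)}_{l+1}\leq a_h-2$, whence $\rho_l\leq\mu_l-2<\mu_l$. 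I expect this last subcase---extracting $\mu_{l+1}<a_h$ from the characterization of $s'$---to be the one genuinely needing care; everything else is either bookkeeping or a citation to the preceding lemmas.
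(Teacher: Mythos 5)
Your proof is correct and follows essentially the same route as the paper: dispose of $a_h=1$ directly, and for $a_h\geq 2$ combine Lemma~\ref{lem.5} and Lemma~\ref{lem.4a} to get $l\leq s(\mu^{(l)},\tilde a)$, invoke Lemma~\ref{lem.8a} for membership in $\mathcal{B}(\mu,a)$, and then check $\rho_l<\mu_l$ by cases from the explicit formula for $\rho(\mu^{(l)},\tilde a)$, using the definition of $s(\mu^{(l)},\tilde a)$ to bound $\mu_{l+1}$. You merely spell out a few steps the paper leaves implicit (the $a_h=1$ case and the final arithmetic), which is fine.
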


\begin{proof}
If $a_h=1$, then
$\rho=\mu^{(l)}\unrhd\lambda(\tilde{a})=\lambda(a')$, since $l\in
R(\mu,a)$.
Thus the assertion holds.

Suppose $a_h\geq2$.
Then $l\leq s(\mu^{(l)},\tilde{a})$ by Lemma~\ref{lem.5}, Lemma~\ref{lem.4a}, and hence
$\rho\in\mathcal{B}(\mu,a)$ by Lemma~\ref{lem.8a}.
Thus it remains to show that $\rho_l<\mu_l$. This can be shown as
follows:
\begin{align*}
\rho_l&=
\begin{cases}
\mu^{(l)}_l-(\tilde{a}_h-\mu^{(l)}_{l+1})&\text{ if $l=s(\mu^{(l)},\tilde{a})$,}\\
\mu^{(l)}_l&\text{ if $l<s(\mu^{(l)},\tilde{a})$}
\end{cases}
\nextleq \mu^{(l)}_l
\nextlq\mu_l,
\end{align*}
where the second inequality follows from the definition of
$s(\mu^{(l)},\tilde{a})$.
\end{proof}

From Lemma~\ref{lem.16}, we obtain a tableau $U\in\STab(\mu,a)$.

\begin{alg}\label{a.3}
\textbf{Input}: $\mu=(\mu_1,\mu_2,\ldots,\mu_k)\vdash n$ and
$a=(a_1,a_2,\ldots,a_h)\vDash n$ such that $\mu\unrhd\lambda(a)$.\\
\textbf{Output}: $U\in\STab(\mu,a)$.\\
\textbf{Initialization}: $\nu:=\mu$, $b:=a$.\\
\textbf{while} $h(b)>1$ \textbf{do}\\
\begin{tabular}{c|l}
&\;$l:=l(\nu,b)$.\\
&\;\textbf{if} $b_{h(b)}=1$ \textbf{then} $\rho:=\nu^{(l)}$.\\
&\;\textbf{else}\\
&\;\;$\rho:=\rho(\nu^{(l)},\tilde{b})$.\\
&\;\textbf{end if}\\
&\;\;$U(i,j):=h(b)$ \textbf{where} $(i,j)\in D_{\nu/\rho}$.\\
&\;$\nu\leftarrow\rho$, $b\leftarrow b'$.\\
\end{tabular}\\
\textbf{end}\\
$U(1,j):=1$ \textbf{where} $1\leq j\leq\nu_1$.\\
Output $U$.
\end{alg}

\begin{exam}
Let $\mu=(4,4,1,1)\vdash 10$ and $a=(1,3,2,2,2)\vDash 10$. Then a
tableau $U\in\STab(\mu,a)$ is obtained via Algorithm \ref{a.3}.
\[
\begin{tabular}{|c|c|c|c|}
\hline
$\nu$ & $b$ & $\rho$ & $U$  \\
\hline $(4,4,1,1)$ & $(1,3,2,2,2)$ & $(4,3,1)$ & $U(2,4)=U(4,1)=5$ \\
\hline $(4,3,1)$ & $(1,3,2,2)$ & $(3,3)$ & $U(1,4)=U(3,1)=4$ \\
\hline $(3,3)$ & $(1,3,2)$ & $(3,1)$ & $U(2,2)=U(2,3)=3$ \\
\hline $(3,1)$ & $(1,3)$ & $(1)$ & $U(1,2)=U(1,3)=U(2,1)=2$ \\
\hline $(1)$ & $(1)$ &  & $U(1,1)=1$ \\
\hline
\end{tabular}
\]
Thus
\[
U=\begin{array}{cccc}
1&2&2&4\\
2&3&3&5\\
4&&&\\
5
\end{array}.
\]
We note that the least element of $\STab(\mu,a)$ is
\[
S=\begin{array}{cccc}
1&2&2&4\\
2&3&5&5\\
3&&&\\
4
\end{array}.
\]
In Section~\ref{sec:3}, we will show that there exists a unique
least element of $\STab(\mu,a)$ whenever $\mu\unrhd\lambda(a)$, and
give an algorithm to construct it.
\end{exam}

\begin{prop}\label{lem.4}
Let $\mu=(\mu_1,\mu_2,\ldots,\mu_k)\vdash n$ and
$a=(a_1,a_2,\ldots,a_h)\vDash n$, and assume $\mu\unrhd\lambda(a)$.
Let $r$ be an integer with $1\leq r\leq k$. Then there exists a
tableau $T\in\STab(\mu,a)$ such that $T(r,\mu_r)=h$ if and only if
$r\in R{(\mu,a)}$.
\end{prop}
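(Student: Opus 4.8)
The ``only if'' direction is exactly Lemma~\ref{lem.4b}, so the work lies in the converse: assuming $r\in R(\mu,a)$, I would construct a tableau $T\in\STab(\mu,a)$ with $T(r,\mu_r)=h$. My plan is to reduce this to producing a single partition $\rho\in\mathcal{B}(\mu,a)$ with the extra property $\rho_r<\mu_r$. Given such a $\rho$, I would fill $D_{\mu/\rho}$ entirely with the value $h$ and fill $D_\rho$ with any tableau $S\in\STab(\rho,a')$, which exists since $\rho\unrhd\lambda(a')$ by Theorem~\ref{lem.15}. Because $\mu/\rho$ is totally disconnected, the boxes of $\mu/\rho$ in row $i$ occupy columns exceeding $\rho_i\ge\mu_{i+1}$, so no $h$ has a box of $D_\mu$ directly below it and no two $h$'s share a column; together with the fact that every entry of $S$ is at most $h-1$, this shows $T\in\STab(\mu,a)$. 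The condition $\rho_r<\mu_r$ puts $(r,\mu_r)$ into $D_{\mu/\rho}$, giving $T(r,\mu_r)=h$ as required.

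It then remains to produce $\rho\in\mathcal{B}(\mu,a)$ with $\rho_r<\mu_r$, and here I would split on how $r$ compares with $s=s(\mu,a)$. When $r\le s$, I expect the construction of Lemma~\ref{lem.16} to go through with $r$ in place of $l$: set $\rho=\mu^{(r)}$ if $a_h=1$ and $\rho=\rho(\mu^{(r)},\tilde{a})$ if $a_h\ge2$. For $a_h=1$ the membership $\rho\in\mathcal{B}(\mu,a)$ follows directly from $\mu^{(r)}\unrhd\lambda(\tilde a)=\lambda(a')$, which is part of $r\in R(\mu,a)$; for $a_h\ge2$, the hypothesis $r\le s$ together with Lemma~\ref{lem.4a} yields $r\le s(\mu^{(r)},\tilde a)$, whence $\rho\in\mathcal{B}(\mu,a)$ by Lemma~\ref{lem.8a}. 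In either case $\rho_r\le\mu^{(r)}_r=\mu_r-1<\mu_r$. The only change from Lemma~\ref{lem.16} is that there the inequality $l\le s$ came for free from Lemma~\ref{lem.5}, whereas here I simply impose $r\le s$.

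The genuinely new case, and the step I expect to be the main obstacle, is $r>s$. The subtlety is that $R(\mu,a)$ can contain indices exceeding $s(\mu,a)$ (for instance $\mu=(4,1,1)$, $a=(1,1,4)$ has $s=1$ but $3\in R(\mu,a)$), so the recipe above fails: for $r>s$ the shape $\rho(\mu^{(r)},\tilde a)$ deletes the box $(r-1,\mu_r)$, and then $\mu/\rho$ is no longer totally disconnected. My resolution is to use the greatest element $\rho(\mu,a)$ of $\mathcal{B}(\mu,a)$ itself, which lies in $\mathcal{B}(\mu,a)$ by Lemma~\ref{lem.8b}. Reading off its definition, for $s<r\le k-1$ one has $\rho(\mu,a)_r=\mu_{r+1}$, and $\mu_r>\mu_{r+1}$ (from $r\in R(\mu,a)$) gives $\rho(\mu,a)_r<\mu_r$; while for $r=k$ one has $\rho(\mu,a)_r=0<\mu_r$. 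Thus $\rho=\rho(\mu,a)$ already satisfies $\rho_r<\mu_r$, which completes the construction. I anticipate the remaining work to be routine: verifying that the assembled $T$ is semistandard and checking the case bookkeeping for $\rho(\mu,a)_r$; the conceptual point is simply that the greatest element of $\mathcal{B}(\mu,a)$ ``reaches'' every row at or below its starting row $s$.
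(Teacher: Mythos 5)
Your proof is correct and follows essentially the same route as the paper: the same reduction to finding $\rho\in\mathcal{B}(\mu,a)$ with $\rho_r<\mu_r$, the same choice of $\rho$ in each case ($\mu^{(r)}$ or $\rho(\mu^{(r)},\tilde{a})$ when $r\le s(\mu,a)$, and the greatest element $\rho(\mu,a)$ of $\mathcal{B}(\mu,a)$ when $r>s(\mu,a)$), and the same appeals to Lemmas~\ref{lem.4a}, \ref{lem.8a} and \ref{lem.8b} together with the same verification that $\rho_r<\mu_r$. (The paper splits on $r\le s(\mu^{(r)},\tilde{a})$ rather than on $r\le s(\mu,a)$, but Lemma~\ref{lem.4a} makes these equivalent, as you note.) The one substantive difference is how you fill $D_\rho$: you invoke Theorem~\ref{lem.15} to obtain some $S\in\STab(\rho,a')$, whereas the paper runs an induction on $n$ and gets the sub-tableau from the inductive hypothesis. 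Your version is logically sound for the proposition as stated, since Theorem~\ref{lem.15} is quoted from the literature, but it forfeits the paper's concluding remark that Proposition~\ref{lem.4} yields an alternative, self-contained proof of the ``if'' part of Theorem~\ref{lem.15}. If you want that byproduct, replace the appeal to Theorem~\ref{lem.15} by induction on $n$: since $\rho\unrhd\lambda(a')$, Lemma~\ref{lem.10} gives $R(\rho,a')\ne\emptyset$, so the inductive hypothesis supplies the required element of $\STab(\rho,a')$.
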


\begin{proof}
The ``only if'' part has been proved in Lemma~\ref{lem.4b}. We prove
the ``if'' part by induction on $n$. If $n=1$ then it is obvious.
Let $r\in R{(\mu,a)}$, $s=s(\mu,a)$ and $s'=s(\mu^{(r)},\tilde{a})$.

If $a_h\geq 2$ then define $\rho\vdash n-a_h$ by
\begin{align}\label{4.eq.1}
\rho=\begin{cases} \rho(\mu,a)&\text{ if $r>s'$,}\\
\rho(\mu^{(r)},\tilde{a})&\text{otherwise}.
\end{cases}
\end{align}
From Lemma~\ref{lem.8b} and $\ref{lem.8a}$, we have
$\rho\in\mathcal{B}(\mu,a)$, so
\begin{equation}\label{eq:thm20}
\rho\unrhd\lambda(a').
\end{equation}

If $a_h=1$, then define by $\rho=\mu^{(r)}$. From the definition of
$R{(\mu,a)}$, (\ref{eq:thm20}) holds in this case also.

Since (\ref{eq:thm20}) implies $R(\rho,\lambda(a'))\neq\emptyset$,
the inductive hypothesis implies that there exists a tableau
$T'\in\STab(\rho,a')$. Define a tableau $T$ of shape $\mu$ and
weight $a$ by
\[T(i,j)=
\begin{cases}
T'(i,j) &\text{ if }(i,j)\in D_{\rho},\\
h &\text{ if } (i,j)\in D_{\mu/\rho}.
\end{cases}
\]

It remains to show that $T(r,\mu_r)=h$. This will follow if we can
show $\rho_r<\mu_r$. If $a_h=1$, then $\rho_r=\mu_r-1<\mu_r$.
Suppose $a_h\geq 2$. If $r>s'$ then we have $r>s'\geq s$ by the
definition of $s'$ and $s$. Since $r\in R{(\mu,a)}$, we have
$\rho_r=\mu_{r+1}<\mu_r$. If $r\leq s'$, then
\begin{align*}
\rho_r&=
\begin{cases}
\mu^{(r)}_r-(\tilde{a}-\mu^{(r)}_{r+1})&\text{ if $r=s'$,}\\
\mu^{(r)}_r&\text{ if $r<s'$}
\end{cases}
\nextleq \mu^{(r)}_r\nexteq \mu_r-1\nextlq\mu_r,
\end{align*}
where the second inequality follows from the definition of $s'$.
\end{proof}

Proposition~\ref{lem.4} justifies the proof of \cite[Lemma
3.7.3]{C}. It also gives an alternative proof of the ``if'' part of
Theorem~\ref{lem.15}.

\section{The least element of $\STab(\mu,a)$}\label{sec:3}

Throughout this section, we let $\mu=(\mu_1,\mu_2,\ldots,\mu_k)\vdash
n$ and $a=(a_1,a_2,\ldots,a_h)\vDash n$. We assume
$\mu\unrhd\lambda(a)$. For a sequence $(i_1,i_2,\ldots,i_j)$ of positive integers,
we abbreviate the partition
\[(\cdots((\mu^{(i_1)})^{(i_2)})\cdots)^{(i_j)}\]
of $n-j$, as $\mu^{(i_1,i_2,\ldots,i_j)}$.

\begin{lem}\label{lem.3b}
We have
\[
R{(\mu,a)}=\{i\mid l(\mu,a)\leq i\leq k,\;\mu_{i}>\mu_{i+1}\}.
\]
\end{lem}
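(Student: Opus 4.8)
The plan is to prove the two inclusions separately, with essentially all of the content residing in the inclusion $\supseteq$.

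For $\subseteq$ I would simply unwind Definition~\ref{def.1}: any $i\in R(\mu,a)$ satisfies $\mu_i>\mu_{i+1}$, and since $\mu_i>\mu_{i+1}\ge 0$ forces $\mu_i\ge 1$ we have $1\le i\le k$; moreover $l(\mu,a)=\min R(\mu,a)\le i$ by the very definition of $l(\mu,a)$. This gives the forward inclusion at once.

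For $\supseteq$ the idea is to translate membership in $R(\mu,a)$ into the partial-sum condition of Lemma~\ref{lem.3a} and then exploit that this condition is monotone in the index. Set $l=l(\mu,a)$, $q=q(a)$ and $\lambda=\lambda(a)$. First I would record, from (\ref{eq:01j}) and from Lemma~\ref{lem.0a} together with the definition $\tilde{\lambda}(a)=\lambda(a)^{(q(a))}$, the two facts $\lambda_q>\lambda_{q+1}$ and $\lambda(\tilde{a})=\lambda^{(q)}$. Then, for any index $i$ with $\mu_i>\mu_{i+1}$, Lemma~\ref{lem.3a} applied with $p=i$ yields
\[
i\in R(\mu,a)\iff\mu^{(i)}\unrhd\lambda^{(q)}\iff(\ast_i),
\]
where $(\ast_i)$ abbreviates the statement ``$i\ge q$, or else $i<q$ and $\sum_{t=1}^{j}\mu_t>\sum_{t=1}^{j}\lambda_t$ for all $j$ with $i\le j<q$.''

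The heart of the argument is the monotonicity of $(\ast_i)$: if $i\le i'$ and $(\ast_i)$ holds, then $(\ast_{i'})$ holds as well. I would verify this by cases ($i\ge q$; $i<q\le i'$; $i<i'<q$), the only nontrivial case being the last, where the index range $i'\le j<q$ is contained in $i\le j<q$, so the required strict inequalities are simply inherited from $(\ast_i)$. Since $l\in R(\mu,a)$ forces $\mu_l>\mu_{l+1}$ and hence $(\ast_l)$, monotonicity then gives $(\ast_i)$ for every $i$ with $l\le i\le k$ and $\mu_i>\mu_{i+1}$, and the displayed equivalence turns this into $i\in R(\mu,a)$, completing $\supseteq$. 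I do not expect a genuine obstacle: once the reformulation via Lemma~\ref{lem.3a} is in place the argument is formal, and the only point demanding care is the bookkeeping needed to invoke that lemma, namely checking its hypotheses $\mu_i>\mu_{i+1}$ and $\lambda_q>\lambda_{q+1}$, using the identification $\lambda(\tilde{a})=\lambda^{(q)}$, and keeping the ranges $i\le j<q$ straight in the monotonicity step.
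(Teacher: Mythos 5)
Your proof is correct and follows essentially the same route as the paper: the containment $\subseteq$ is immediate from Definition~\ref{def.1}, and the containment $\supseteq$ is reduced to Lemma~\ref{lem.3a}. The paper packages your monotonicity step slightly differently --- it applies Lemma~\ref{lem.3a} to the pair $(\mu,\mu)$ with indices $i\geq l$ to obtain $\mu^{(i)}\unrhd\mu^{(l)}$ and then concludes $\mu^{(i)}\unrhd\mu^{(l)}\unrhd\lambda^{(q)}=\lambda(\tilde{a})$ by transitivity, which is exactly the content of your observation that the condition $(\ast_i)$ is monotone in the index $i$.
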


\begin{proof}
Write $q=q(a)$, $\lambda=\lambda(a)$ and $l=l(\mu,a)$. Then
$\lambda_q>\lambda_{q+1}$ by (\ref{eq:01j}). Let $l\leq i\leq k$ and
$\mu_{i}>\mu_{i+1}$. From Lemma~\ref{lem.3a} and the definition of
$l$, we have $\mu^{(i)}\unrhd\mu^{(l)}\unrhd\lambda^{(q)}$. Thus
$i\in R{(\mu,a)}$.
\end{proof}

For each $i$ with $1\leq i\leq k$, set
\[
R{(\mu,a,i)}=\{r\in R{(\mu,a)}\mid r\geq i\}.
\]
From Lemma~\ref{lem.3b}, we have $k\in R{(\mu,a,i)}$ for each $i$
with $1\leq i\leq k$. Set
\[
l(\mu,a,i)=\min R{(\mu,a,i)}.
\]
Clearly, $l(\mu,a,1)=l(\mu,a)$.

\begin{lem}\label{lem.3d}
Let $\mu$ and $\mu'$ be partitions of $n$.
Suppose that $i\in R(\mu,a)$ and $i'\in R(\mu',a)$ satisfy $i\leq i'$,
$\mu_{i'}\geq \mu'_{i'}$ and $\mu_j=\mu'_j$ for all $j$ with $j>i'$. Then
\[
R(\mu'^{(i')},\tilde{a},i')\subseteq
R(\mu^{(i)},\tilde{a},i).
\]
\end{lem}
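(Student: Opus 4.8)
The plan is to reduce the stated set inclusion to an elementwise check and then exploit a single structural fact: the two once-shifted shapes $\mu^{(i)}$ and $\mu'^{(i')}$ have \emph{identical partial sums from index $i'$ onwards}. Write $\nu=\mu^{(i)}$ and $\nu'=\mu'^{(i')}$; these are genuine partitions of $n-1$ because $i\in R(\mu,a)$ and $i'\in R(\mu',a)$ force $\mu_i>\mu_{i+1}$ and $\mu'_{i'}>\mu'_{i'+1}$ via Definition~\ref{def.1}. Set $\lambda=\lambda(\tilde a)$. The same memberships give $\nu\unrhd\lambda$ and $\nu'\unrhd\lambda$, which is exactly what I need to apply Lemma~\ref{lem.3a} to each shape with $q=q(\tilde a)$: Lemma~\ref{lem.0a} applied to $\tilde a$ identifies $\lambda(\tilde{\tilde{a}})$ with $\lambda^{(q(\tilde a))}$, and (\ref{eq:01j}) applied to $\tilde a$ supplies $\lambda_{q(\tilde a)}>\lambda_{q(\tilde a)+1}$.

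The first computation I would carry out is the partial-sum identity $\sum_{j=1}^m\nu_j=\sum_{j=1}^m\nu'_j$ for every $m\geq i'$. Since $\mu,\mu'\vdash n$ agree beyond index $i'$, their tails past $m\geq i'$ coincide, so $\sum_{j=1}^m\mu_j=\sum_{j=1}^m\mu'_j$; and because $i\leq i'\leq m$ and $i'\leq m$, both shifts fall inside $[1,m]$, so $\sum_{j=1}^m\nu_j=\sum_{j=1}^m\mu_j-1$ and $\sum_{j=1}^m\nu'_j=\sum_{j=1}^m\mu'_j-1$, giving the claim. Now take $r\in R(\nu',\tilde a,i')$, so $r\geq i'\geq i$ and $r\in R(\nu',\tilde a)$; I must show $r\in R(\nu,\tilde a)$, which then lands in $R(\nu,\tilde a,i)$ since $r\geq i$. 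For the covering condition $\nu_r>\nu_{r+1}$, differencing the identity yields $\nu_{r+1}=\nu'_{r+1}$ and, when $r>i'$, also $\nu_r=\nu'_r$; at the boundary $r=i'$ I instead use $\nu_{i'}\geq\nu'_{i'}$, which follows directly from $\mu_{i'}\geq\mu'_{i'}$ together with $i\leq i'$ to locate the shift. In every case $\nu_r-\nu_{r+1}\geq\nu'_r-\nu'_{r+1}>0$.

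For the dominance half, Lemma~\ref{lem.3a} reduces membership of $r$ (given $\nu_r>\nu_{r+1}$ and $\nu\unrhd\lambda$) to: either $r\geq q(\tilde a)$, or $r<q(\tilde a)$ together with $\sum_{j=1}^m\nu_j>\sum_{j=1}^m\lambda_j$ for all $m$ with $r\leq m<q(\tilde a)$. Since $r\geq i'$, every such $m$ satisfies $m\geq i'$, where $\sum_{j=1}^m\nu_j=\sum_{j=1}^m\nu'_j$; hence the condition for $\nu$ is literally the condition satisfied by $\nu'$ via $r\in R(\nu',\tilde a)$. This gives $r\in R(\nu,\tilde a)$ and completes the inclusion.

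The main obstacle is not the dominance transfer, which becomes transparent once the partial-sum identity confines all relevant indices to the region $m\geq i'$ where the two shapes are indistinguishable; rather it is the boundary case $r=i'$ of the covering condition, where the identity determines $\nu_{i'+1}$ but not $\nu_{i'}$, forcing one to fall back on the explicit hypothesis $\mu_{i'}\geq\mu'_{i'}$. Keeping the bookkeeping of the two shift positions $i\leq i'$ correct inside the partial sums is the one place where care is genuinely required.
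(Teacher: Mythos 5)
Your proof is correct, and while it rests on the same core observation as the paper's --- that the partial sums of $\mu^{(i)}$ and $\mu'^{(i')}$ coincide from index $i'$ onward, with the hypothesis $\mu_{i'}\geq\mu'_{i'}$ needed only at the boundary $r=i'$ of the corner condition --- your treatment of the dominance condition is organized differently. The paper verifies $\mu^{(i,r)}\unrhd\lambda(\tilde{\tilde{a}})$ directly, checking every partial sum by splitting into $q<r$ (where it falls back on $\mu^{(i)}\unrhd\lambda(\tilde{a})$ together with $\sum_{j\le q}\lambda(\tilde a)_j\ge\sum_{j\le q}\lambda(\tilde{\tilde a})_j$) and $q\geq r$ (where it equates the partial sums with those of $\mu'^{(i',r)}$ and invokes $r\in R(\mu'^{(i')},\tilde a)$). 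You instead invoke Lemma~\ref{lem.3a} on both shapes, which localizes the membership test to the strict inequalities on the window $[r,q(\tilde a))$; since that window lies entirely in the region $m\geq i'$ where the two partial-sum sequences agree, the condition transfers verbatim from $\nu'$ to $\nu$ with no case split. Your route buys a cleaner transfer argument at the cost of checking the hypotheses of Lemma~\ref{lem.3a} twice (the corner and dominance preconditions on each of $\nu$, $\nu'$, plus $\lambda(\tilde{\tilde a})=\lambda(\tilde a)^{(q(\tilde a))}$ via Lemma~\ref{lem.0a} and (\ref{eq:01j}) applied to $\tilde a$); the paper's direct computation is more self-contained but has to handle the $q<r$ range explicitly. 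Both are complete, and your bookkeeping of the two shift positions inside the partial sums is accurate throughout.
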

\begin{proof}
Let $r\in R(\mu'^{(i')},\tilde{a},i')$.
Since $i\leq i'\leq r$, we have
\begin{equation}\label{eq.3d.5}
\delta_{i,r}\leq\delta_{i',r},
\end{equation}
and
\begin{equation}\label{eq.3d.4}
\mu'^{(i')}_{r+1}=\mu'_{r+1}=\mu_{r+1}=\mu^{(i)}_{r+1}.
\end{equation}
Thus
\begin{align*}
\mu^{(i)}_r&=\mu_r-\delta_{i,r}
\\&\geq
\mu'_r-\delta_{i,r}
\\&\geq
\mu'_r-\delta_{i',r}
&&\text{(by (\ref{eq.3d.5}))}
\nexteq
\mu'^{(i')}_r
\nextgq
\mu'^{(i')}_{r+1}
\nexteq\mu^{(i)}_{r+1}
&&\text{(by (\ref{eq.3d.4}))}.
\end{align*}

It remains show that
$\mu^{(i,r)}\unrhd\lambda(\tilde{\tilde{a}})$. If $1\leq q<r$, then
\[
\sum_{j=1}^q\mu^{(i,r)}_j=\sum_{j=1}^q\mu^{(i)}_j\geq
\sum_{j=1}^q\lambda(\tilde{a})_j
\geq\sum_{j=1}^q\lambda(\tilde{\tilde{a}})_j,
\]
since $i\in R(\mu,a)$. If $r\leq q$, then $i'\leq q$, and hence
$\sum_{j>q}\mu_j=\sum_{j>q}\mu'_j$. Thus
\begin{align*}
\sum_{j=1}^q\mu^{(i,r)}_j&=\sum_{j=1}^q\mu_j-2\nexteq
\sum_{j=1}^{q}\mu'_j-2\nexteq
\sum_{j=1}^{q}\mu'^{(i',r)}_j\nextgeq
\sum_{j=1}^q\lambda(\tilde{\tilde{a}})_j,
\end{align*}
by $r\in R(\mu'^{(i')},\tilde{a})$.
\end{proof}

\begin{lem}\label{lem.6}
Assume $a_h\geq 2$, $r\in R(\mu,a)$, and $r\leq s(\mu,a)$.
Then
$s(\mu,a)\leq s(\mu^{(r)},\tilde{a})$
In particular, $R(\mu^{(r)},\tilde{a},r)\neq\emptyset$
and $l(\mu^{(r)},\tilde{a},r)\leq s(\mu^{(r)},\tilde{a})$.
\end{lem}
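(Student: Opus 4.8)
The plan is to prove the main inequality $s(\mu,a)\le s(\mu^{(r)},\tilde a)$ directly from the definitions, and then read off the two ``in particular'' assertions using Lemma~\ref{lem.10}. Write $s=s(\mu,a)$ and $s'=s(\mu^{(r)},\tilde a)$. Since $a_h\ge 2$, the last part of $\tilde a$ is $\tilde a_h=a_h-1$, so $s'=\max\{i\mid \mu^{(r)}_i\ge a_h-1\}$. Because $s'$ is defined as a maximum, it suffices to exhibit an index as large as $s$ at which $\mu^{(r)}$ is at least $a_h-1$; concretely, I would show $\mu^{(r)}_s\ge a_h-1$, which forces $s\le s'$.

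To verify $\mu^{(r)}_s\ge a_h-1$ I would split into the two cases permitted by the hypothesis $r\le s$. By (\ref{eq:01s}) we have $\mu_s\ge a_h$. If $r<s$, then $\mu^{(r)}_s=\mu_s\ge a_h>a_h-1$. If $r=s$, then $\mu^{(r)}_s=\mu_s-1\ge a_h-1$. In either case $\mu^{(r)}_s\ge a_h-1=\tilde a_h\ge 1$, so $s$ is a genuine part index of $\mu^{(r)}$ and $s\le s'$ as required.

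For the remaining claims, I would first note that $r\in R(\mu,a)$ gives $\mu^{(r)}\unrhd\lambda(\tilde a)$ by Definition~\ref{def.1}, so that Lemma~\ref{lem.10} applies to the pair $(\mu^{(r)},\tilde a)$ and yields $s'=s(\mu^{(r)},\tilde a)\in R(\mu^{(r)},\tilde a)$. Combining this with $r\le s\le s'$ gives $s'\ge r$, so in fact $s'\in R(\mu^{(r)},\tilde a,r)$; hence $R(\mu^{(r)},\tilde a,r)\ne\emptyset$. Since $l(\mu^{(r)},\tilde a,r)$ is by definition the minimum of this set, it is bounded above by the element $s'$, i.e. $l(\mu^{(r)},\tilde a,r)\le s'=s(\mu^{(r)},\tilde a)$.

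There is no serious obstacle here: the argument is a short chain of inequalities. The only point requiring any care is the elementary case split establishing $\mu^{(r)}_s\ge\tilde a_h$, together with checking that the hypothesis of Lemma~\ref{lem.10} (namely $\mu^{(r)}\unrhd\lambda(\tilde a)$) is supplied precisely by the assumption $r\in R(\mu,a)$.
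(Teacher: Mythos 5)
Your proposal is correct and follows essentially the same route as the paper: the case split $r<s$ versus $r=s$ is exactly the paper's computation $\mu^{(r)}_{s}=\mu_{s}-\delta_{r,s}\geq a_h-\delta_{r,s}\geq a_h-1=\tilde{a}_h$, and the ``in particular'' claims are obtained, as in the paper, by applying Lemma~\ref{lem.10} to the pair $(\mu^{(r)},\tilde{a})$ and using $r\leq s\leq s'$. You are also right to flag that the applicability of Lemma~\ref{lem.10} rests on $\mu^{(r)}\unrhd\lambda(\tilde{a})$, which is supplied by $r\in R(\mu,a)$.
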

\begin{proof}
Since $a_h\geq2$, we have
\begin{align*}
\tilde{a}_h&=a_h-1
\\&\leq
a_h-\delta_{r,s(\mu,a)}
\nextleq
\mu_{s(\mu,a)}-\delta_{r,s(\mu,a)}
\nexteq
\mu^{(r)}_{s(\mu,a)}.
\end{align*}
Thus $s(\mu^{(r)},\tilde{a})\geq s(\mu,a)\geq r$,
and hence $s(\mu^{(r)},\tilde{a})\in R(\mu^{(r)},\tilde{a},r)$
by Lemma~\ref{lem.10}.
\end{proof}

\begin{nota}\label{not.3}
Let $r\in R(\mu,a)$ and suppose $r\leq s(\mu,a)$.
Define $a^i$, $l_i$ and $\mu^i$
inductively by setting  $a^0=a$, $l_0=r$, $\mu^0=\mu$
and for $0\leq i< n$,
\begin{align*}
a^{i+1}&=\widetilde{a^{i}}\vDash n-i-1,\\
l_{i+1}&=
\begin{cases}
l(\mu^{i},a^{i},1)&\text{ if }i\in A,\\
l(\mu^{i},a^{i},l_{i})&\text{ otherwise,}
\end{cases}\\
\mu^{i+1}&=(\mu^{i})^{(l_{i+1})}\vdash n-i-1,
\end{align*}
where $A=\{a_h,a_h+a_{h-1},\ldots,a_h+\cdots+a_2\}$.

In order to check $l_{i+1}$ and $\mu^{i+1}$ are well-defined,
we show
\begin{align}
\mu^{i}&\unrhd\lambda(a^i)&&(0\leq i< n),\label{17.eq.7}\\
R(\mu^{i},a^{i},l_{i})&\neq\emptyset&&(0\leq i<n,\;i\notin A),
\label{w1}\\
l_{i+1}&\leq s(\mu^i,a^i)&&(0\leq i< n).\label{w0}
\end{align}
Indeed, (\ref{17.eq.7})--(\ref{w1}) guarantee that
$l_{i+1}$ is defined as an element of $R(\mu^i,a^i)$,
even when $i\notin A$,
so $\mu^{i+1}$ is also defined.

We prove (\ref{17.eq.7})--(\ref{w0}) by induction on $i$.
If $i=0$ then, as
$\mu\unrhd\lambda(a)$, (\ref{17.eq.7}) holds. Also,
(\ref{w1}) holds since $r\in R(\mu,a,r)$.
Since $0\notin A$, we have $l_1=l(\mu,a,l_0)=r\leq s(\mu,a)$.
Thus (\ref{w0}) holds for $i=0$ as well.

Assume (\ref{17.eq.7})--(\ref{w0}) hold for some $i\in\{0,1,\dots,n-2\}$.
Since $l_{i+1}\in R(\mu^i,a^i)$,
(\ref{17.eq.7}) holds for $i+1$.

If $i+1\notin A$, then
$a^i_{h(a^i)}\geq2$. Also $l_{i+1}\leq s(\mu^i,a^i)$ by induction.
Lemma~\ref{lem.6} then implies $R(\mu^{i+1},a^{i+1},l_{i+1})\neq\emptyset$
and $l_{i+2}\leq s(\mu^{i+1},a^{i+1})$,
so (\ref{w1})--(\ref{w0}) hold for $i+1$ as well.

If $i+1\in A$, then
$l_{i+2}=l(\mu^{i+1},a^{i+1},1)=\min R(\mu^{i+1},a^{i+1})
\leq s(\mu^{i+1},a^{i+1})$ by Lemma~\ref{lem.10}.
Thus (\ref{w0}) holds for $i+1$ as well.

Clearly, $\mu^{i}=\mu^{(l_1,\ldots,l_{i})}$.
\end{nota}

\begin{lem}\label{lem.3f}
Let $T\in\STab(\mu,a)$.
With reference to Notation~\ref{not.3},
suppose
$T^{-1}(h)=\{(t_1,t'_1),(t_2,t'_2),\ldots,(t_{a_h},t'_{a_h})\}$ and
$r\leq t_1\leq t_2\leq\cdots\leq t_{a_h}$.
Then  $l_i\leq t_i$ for $1\leq i\leq a_h$.
In particular, $\mu^{(t_1,\ldots,t_{a_h})}\unrhd\mu^{a_h}$.
\end{lem}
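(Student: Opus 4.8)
The plan is to establish $l_i\le t_i$ for $1\le i\le a_h$ by induction on $i$, after which the final assertion follows formally. Indeed, $\mu^{(t_1,\dots,t_{a_h})}$ is the shape of $T^{-1}(\{1,\dots,h-1\})$, hence a partition of $n-a_h$, and $\mu^{a_h}=\mu^{(l_1,\dots,l_{a_h})}$ is a partition of $n-a_h$ as well; for every $m$ one has $\sum_{p=1}^m\mu^{(t_1,\dots,t_{a_h})}_p=\sum_{p=1}^m\mu_p-|\{i\mid t_i\le m\}|$ and the analogous identity for $\mu^{a_h}$ with $l_i$ in place of $t_i$, so $l_i\le t_i$ gives $\{i\mid t_i\le m\}\subseteq\{i\mid l_i\le m\}$ and therefore $\mu^{(t_1,\dots,t_{a_h})}\unrhd\mu^{a_h}$. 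Note first that throughout the relevant range $0\le i<a_h$ we have $i\notin A$ (as $\min A=a_h$), so $l_{i+1}=\min\{r'\in R(\mu^i,a^i)\mid r'\ge l_i\}$; in particular $l_1=r\le t_1$ by the hypothesis, which is the base case.

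For the inductive step I would reduce the problem to a removability statement. Since $l_{i+1}=\min\{r'\in R(\mu^i,a^i)\mid r'\ge l_i\}$ and $t_{i+1}\ge t_i\ge l_i$ (using $l_i\le t_i$), it suffices to show $t_{i+1}\in R(\mu^i,a^i)$. I would first obtain the weaker fact that $t_{i+1}$ is removable for the tableau shape $\nu^i:=\mu^{(t_1,\dots,t_i)}$. Deleting from $T$ the $i$ cells of value $h$ that lie in rows $t_1\le\cdots\le t_i$ (the topmost $i$ cells of the horizontal strip $T^{-1}(h)$) produces a tableau $T^{(i)}\in\STab(\nu^i,a^i)$ with $T^{(i)}(t_{i+1},\nu^i_{t_{i+1}})=h$. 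Here one uses that any $h$-cell $(p,c)$ satisfies $c>\mu_{p+1}$, since otherwise $(p+1,c)\in D_\mu$ would carry a value exceeding $h$; this observation guarantees both that $\nu^i$ is a partition and that the deletion leaves a semistandard tableau. Lemma~\ref{lem.4b} then yields $t_{i+1}\in R(\nu^i,a^i)$.

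The remaining task is to pass from the tableau shape $\nu^i$ to the greedy shape $\mu^i:=\mu^{(l_1,\dots,l_i)}$, and for this I would invoke Lemma~\ref{lem.3d}. Because a bare inequality $l_{i+1}\le t_{i+1}$ cannot by itself be fed into Lemma~\ref{lem.3d} at the next step, I would strengthen the induction hypothesis to carry, besides $l_i\le t_i$, the containment $R(\nu^i,a^i,t_i)\subseteq R(\mu^i,a^i,l_i)$ together with $\mu^i_j=\nu^i_j$ for $j>t_i$ and $\mu^i_{t_i}\ge\nu^i_{t_i}$. Granting this, $t_{i+1}\in R(\nu^i,a^i)$ and $t_{i+1}\ge t_i$ give $t_{i+1}\in R(\nu^i,a^i,t_i)\subseteq R(\mu^i,a^i,l_i)$, hence $t_{i+1}\in R(\mu^i,a^i)$ and $l_{i+1}\le t_{i+1}$. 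With this inequality secured, the comparison hypotheses supply exactly the conditions of Lemma~\ref{lem.3d} for the pairs $(\mu^i,l_{i+1})$ and $(\nu^i,t_{i+1})$, yielding $R(\nu^{i+1},a^{i+1},t_{i+1})\subseteq R(\mu^{i+1},a^{i+1},l_{i+1})$; a direct check of how removing rows $l_{i+1}\le t_{i+1}$ acts on the two shapes restores the comparison $\mu^{i+1}_j=\nu^{i+1}_j$ for $j>t_{i+1}$ and $\mu^{i+1}_{t_{i+1}}\ge\nu^{i+1}_{t_{i+1}}$. The first level of the invariant, namely $R(\nu^1,a^1,t_1)\subseteq R(\mu^1,a^1,l_1)$, comes from Lemma~\ref{lem.3d} applied to the pair $\mu,\mu$ with rows $l_1=r$ and $t_1$, which lie in $R(\mu,a)$ by hypothesis and by the $i=0$ instance of the construction above, respectively.

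The main obstacle is the bookkeeping that keeps Lemma~\ref{lem.3d} applicable at every step rather than any individual estimate: one must design the inductive invariant so that its two hypotheses $l_{i+1}\le t_{i+1}$ and $\mu^i_{t_{i+1}}\ge\nu^i_{t_{i+1}}$ are available precisely when the lemma is invoked, which forces the $R$-containment to be propagated alongside the numerical inequality $l_i\le t_i$. Once the elementary observation $c>\mu_{p+1}$ for $h$-cells $(p,c)$ is in place, verifying that $\nu^i$ is a partition, that $T^{(i)}$ is semistandard, and that the shape comparison persists under the removals is routine.
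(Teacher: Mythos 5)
Your proposal is correct and follows essentially the same route as the paper's proof: induction on $i$, using Lemma~\ref{lem.4b} on the restriction of $T$ to get $t_{i+1}\in R(\mu^{(t_1,\dots,t_i)},a^i,t_i)$, and Lemma~\ref{lem.3d} to transfer this into $R(\mu^i,a^i,l_i)$, whence $l_{i+1}\le t_{i+1}$. The only difference is bookkeeping: you carry the $R$-containment and the shape comparisons as a strengthened induction invariant, whereas the paper re-derives the hypotheses of Lemma~\ref{lem.3d} at each step directly from the accumulated inequalities $l_p\le t_p$ ($p\le i$); both work.
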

\begin{proof}
We prove the assertion by
induction on $i$.
If $i=1$, then $l_1=r\leq t_1$.

Assume $l_1\leq t_1,\dots,l_i\leq t_i$ hold
for some $i$ with $1\leq i<a_h$.
We aim to show $l_{i+1}\leq t_{i+1}$ by deriving
\begin{equation}\label{eq:lem.3fA}
R(\mu^{(t_1,\dots,t_i)},a^i,t_i)\subseteq
R(\mu^i,a^i,l_i)
\end{equation}
from Lemma~\ref{lem.3d}. In order to do so, we need to verify
the hypotheses of Lemma~\ref{lem.3d}.
By the definition of $l_i$, we
have $l_i\in R(\mu^{i-1},a^{i-1})$.
Since the restriction of $T$ to $D_{\mu^{(t_1,\dots,t_{i-1})}}$
is an element of $\STab(\mu^{(t_1,\dots,t_{i-1})},a^{i-1})$,
Lemma~\ref{lem.4b} implies
$t_i\in R(\mu^{(t_1,\dots,t_{i-1})},a^{i-1})$,
and our inductive hypothesis shows $l_i\leq t_i$.
Similarly, we have
\begin{equation}\label{eq:lem.3f}
t_{i+1}\in R(\mu^{(t_1,\dots,t_{i})},a^{i},t_i).
\end{equation}
Since $l_p\leq t_p\leq t_i$ for $1\leq p\leq i-1$
by our inductive hypothesis,
\begin{align*}
\mu^{i-1}_{t_i}&=\mu^{(l_1,\dots,l_{i-1})}_{t_i}
\nexteq
\mu_{t_i}-|\{p\mid 1\leq p\leq i-1,\;l_p=t_i\}|
\nextgeq
\mu_{t_i}-|\{p\mid 1\leq p\leq i-1,\;t_p=t_i\}|
\nexteq
\mu^{(t_1,\dots,t_{i-1})}_{t_i}.
\end{align*}
Finally, for $j>t_i$, we have
$\mu^{i-1}_{j}=\mu^{(l_1,\dots,l_{i-1})}_{j}=
\mu_j=\mu^{(t_1,\dots,t_{i-1})}_{j}$,
since $l_p\leq t_p\leq t_i$ for $1\leq p\leq i-1$.
Therefore, we have verified all the hypotheses of
Lemma~\ref{lem.3d}, and we obtain (\ref{eq:lem.3fA}).

Now
\begin{align*}
l_{i+1}&=l(\mu^i,a^i,l_i)
\nexteq
\min R(\mu^i,a^i,l_i)
\nextleq
\min R(\mu^{(t_1,\dots,t_i)},a^i,t_i)
&&\text{(by (\ref{eq:lem.3fA}))}
\nextleq
t_{i+1}
&&\text{(by (\ref{eq:lem.3f})).}
\end{align*}
\end{proof}

\begin{thm}\label{thm.3}
Let $\mu=(\mu_1,\mu_2,\ldots,\mu_k)\vdash
n$ and $a=(a_1,a_2,\ldots,a_h)\vDash n$, and suppose
$\mu\unrhd\lambda(a)$.
Let $r\in R(\mu,a)$ and suppose $r\leq s(\mu,a)$.
Define $a^i$, $l_i$ and $\mu^i$ as in Notation~\ref{not.3}.
Define a tableau $S$ of shape
$\mu$ and weight $a$ by
\begin{equation*}
S(l_{i+1},\mu^i_{l_{i+1}})=t,
\end{equation*}
where $0\leq i<n$ and
$\sum_{j=1}^{t-1} a_j< n-i\leq\sum_{j=1}^t a_j$. Then $S$ is
the least element of the subposet
\begin{equation}\label{eq:thm.3A}
\{T\in\STab(\mu,a)\mid\min\{i\mid T(i,\mu_i)=h\}\geq r\}
\end{equation}
In particular, if $r=l(\mu,a)$, then $S$ is the least element of
$\STab(\mu,a)$.
\end{thm}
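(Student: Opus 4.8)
The plan is to argue by induction on $n$, peeling off the boxes of the largest value $h$ and recursing on the smaller instance $(\mu^{a_h},a')$. The case $n=1$ is immediate, so assume $n>1$. Write $\sigma=\mu^{a_h}=\mu^{(l_1,\dots,l_{a_h})}$; by construction $S^{-1}(\{1,\dots,h-1\})=D_\sigma$ and $S^{-1}(h)$ sits on the skew shape $\mu/\sigma$. The first bookkeeping step is to check that the restriction $S|_{D_\sigma}$ is exactly the tableau produced by the construction of Notation~\ref{not.3} applied to $(\mu^{a_h},a')$ with starting index $r'=l(\mu^{a_h},a')$: since $a_h\in A$, the recursion resets at step $a_h$ to $l_{a_h+1}=l(\mu^{a_h},a^{a_h},1)=l(\mu^{a_h},a')=r'$, and for $i\ge a_h$ the data $(\mu^i,a^i,l_i)$ for $(\mu,a)$ agree, after shifting the index by $a_h$, with the corresponding data for $(\mu^{a_h},a')$. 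Because $r'=\min R(\mu^{a_h},a')$ we have $r'\in R(\mu^{a_h},a')$ and $r'\le s(\mu^{a_h},a')$ by Lemma~\ref{lem.5}, and $\mu^{a_h}\unrhd\lambda(a')$ by (\ref{17.eq.7}); thus the induction hypothesis applies to this smaller instance and shows that $S|_{D_\sigma}\in\STab(\mu^{a_h},a')$ is the least element of $\STab(\mu^{a_h},a')$.

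Next I would verify $S\in\STab(\mu,a)$ and that $S$ lies in the subposet (\ref{eq:thm.3A}). Given the inductive statement for $S|_{D_\sigma}$, it remains only to treat the value $h$: the boxes of $S^{-1}(h)$ lie in rows $l_1\le l_2\le\cdots\le l_{a_h}$ (the $l_i$ are nondecreasing within this block since $1,\dots,a_h-1\notin A$, and $l_1=l(\mu,a,r)\ge r$), and one checks they occupy distinct columns: if a box is removed from row $j$ at column $c=\mu^i_j$ at step $i$, then at any later step $i'>i$ a removal in a row $j''>j$ occurs at column $\mu^{i'}_{j''}\le\mu^{i'}_{j}\le\mu^{i+1}_{j}=c-1<c$. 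Hence $\mu/\sigma$ is a horizontal strip; together with $S|_{D_\sigma}\in\STab(\mu^{a_h},a')$ this gives $S\in\STab(\mu,a)$ of weight $a$. Moreover the topmost row carrying an $h$ is $l_1\ge r$, so $\min\{i\mid S(i,\mu_i)=h\}=l_1\ge r$ and $S$ belongs to (\ref{eq:thm.3A}).

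For leastness, let $T$ be any tableau in the subposet (\ref{eq:thm.3A}) and write its $h$-boxes as $T^{-1}(h)=\{(t_1,t_1'),\dots,(t_{a_h},t_{a_h}')\}$ with $t_1\le\cdots\le t_{a_h}$; since $h$ is maximal its occurrences are right-justified in each row, so $t_1=\min\{i\mid T(i,\mu_i)=h\}\ge r$. Let $\tau=\mu^{(t_1,\dots,t_{a_h})}$ be the shape of $T^{-1}(\{1,\dots,h-1\})$. Applying Lemma~\ref{lem.3f} with this $r$ gives $l_i\le t_i$ for all $i$ and $\tau\unrhd\sigma$. I now invoke the recursive description of the order: if $\tau\rhd\sigma$ then $S\le T$ immediately; if $\tau=\sigma$, then $T|_{D_\sigma}\in\STab(\mu^{a_h},a')$, and since $S|_{D_\sigma}$ is the least element of that poset (first paragraph), $S|_{D_\sigma}\le T|_{D_\sigma}$, whence $S\le T$. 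This proves $S$ is the least element of (\ref{eq:thm.3A}). Finally, when $r=l(\mu,a)$, Lemma~\ref{lem.19} shows every $T\in\STab(\mu,a)$ satisfies $\min\{i\mid T(i,\mu_i)=h\}\ge l(\mu,a)=r$, so the subposet (\ref{eq:thm.3A}) is all of $\STab(\mu,a)$, giving the final assertion.

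The step I expect to be most delicate is the first one: matching $S|_{D_\sigma}$ with the construction for the reduced instance. The recursion in Notation~\ref{not.3} treats the block-boundary steps $i\in A$ (where the search for a removable box resets to index $1$) differently from the interior steps (where it resumes from $l_i$), so one must carefully track the index shift by $a_h$ and confirm that the reset $l_{a_h+1}=l(\mu^{a_h},a',1)$ is precisely the default initial index $r'=l(\mu^{a_h},a')$ of the sub-instance; everything else then follows from the induction hypothesis and the already-established Lemma~\ref{lem.3f}.
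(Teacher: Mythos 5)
Your proof is correct, but it organizes the induction differently from the paper's. The paper peels off a \emph{single} box per inductive step: it recurses on $(\nu,b)=(\mu^{(l_1)},\tilde a)$ with starting index $l_2=l(\nu,b,l_1)$, which in general is \emph{not} $\min R(\nu,b)$; this is exactly why the theorem has to be stated for an arbitrary $r\in R(\mu,a)$ with $r\le s(\mu,a)$ rather than only $r=l(\mu,a)$ --- the general-$r$ form is what makes the one-box induction close. Its leastness argument then splits on whether $\min\{i\mid T(i,\mu_i)=h\}$ equals or exceeds $l_1$, and the bookkeeping (the auxiliary $b$ and $B$, the case distinction $a_h=1$ versus $a_h\ge 2$) is correspondingly heavier. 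You instead peel off the entire block of $a_h$ boxes labelled $h$ and recurse on $(\mu^{a_h},a')$ with the default starting index $l(\mu^{a_h},a')$, so the inductive hypothesis is only ever invoked in its ``least element of all of $\STab(\cdot,\cdot)$'' form, and the leastness argument collapses to the single dichotomy $\tau\rhd\sigma$ versus $\tau=\sigma$ dictated by the recursive definition of the order; both proofs ultimately rest on Lemma~\ref{lem.3f} for $\tau\unrhd\sigma$. What your route buys is a cleaner comparison step; what it costs is the extra verification that $\mu/\sigma$ is a horizontal strip (your distinct-columns argument is sound: rows are nondecreasing across the block since $1,\dots,a_h-1\notin A$, and $\mu^{i'}_{j''}\le\mu^{i+1}_{j}=\mu^i_j-1$ rules out any column collision) together with the block-boundary matching $l_{a_h+1}=l(\mu^{a_h},a^{a_h},1)=l(\mu^{a_h},a',r')$, which you correctly identify as the delicate point and handle. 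Two cosmetic repairs: since your step reduces $n$ by $a_h$, phrase the induction as strong induction on $n$ (or induct on $h$), and dispose of the case $h=1$ separately, where $\STab(\mu,a)$ is a singleton and the sub-instance is empty.
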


\begin{proof}
Note that the tableau $S$ is well-defined. Indeed, by the definition of
$\mu^{i}$, we have
\begin{equation}\label{17.eq.13}
D_{\mu^{i}}=D_{\mu^{i+1}}\cup\{(l_{i+1},\mu^{i}_{l_{i+1}})\}.
\end{equation}
So $D_{\mu}=\{(l_{i+1},\mu^i_{l_{i+1}})\mid 0\leq i<n\}$.

Next, we prove the statement by induction on $n$. If $n=1$ then
$\mu=a=(1)$, so it is obvious.

Assume that the statement holds for $n-1$. We apply Notation~\ref{not.3}
with $r,\mu,a$ replaced by $l_2,\nu=\mu^1,b=a^1$, respectively.
This is admissible since
 $l_2\in R(\mu^1,a^1)=R(\nu,b)$ and $l_2\leq s(\mu^1,a^1)=s(\nu,b)$ by (\ref{w0}).
Define $b^i$, $l'_i$ and $\nu^i$ inductively by setting
$b^0=b$, $l'_0=l_2$, $\nu^0=\nu$
and for $0\leq i< n-1$,
\begin{align*}
b^{i+1}&=\widetilde{b^{i}}\vDash n-i-2,\\
l'_{i+1}&=
\begin{cases}
l(\nu^i,b^i,1)&\text{ if } i\in B,\\
l(\nu^i,b^i,l'_{i})&\text{ otherwise, }
\end{cases}\\
\nu^{i+1}&=(\nu^{i})^{(l'_{i+1})}\vdash n-i-2,
\end{align*}
where
\begin{align*}
B&=\begin{cases}
\{b_{h-1},b_{h-1}+b_{h-2},\dots,b_{h-1}+\cdots+b_2\}
&\text{if $a_h=1$,}\\
\{b_{h},b_{h}+b_{h-1},\dots,b_{h}+\cdots+b_2\}
&\text{otherwise,}
\end{cases}\\
b&=\begin{cases}
(b_1,\dots,b_{h-1})&\text{if $a_h=1$,}\\
(b_1,\dots,b_{h})&\text{otherwise.}
\end{cases}
\end{align*}
Define a tableau
$\tilde{S}$ of shape $\nu$ and weight $b$ by
\begin{equation*}
\tilde{S}(l'_{i+1},\nu^i_{l'_{i+1}})=t,
\end{equation*}
where
$\sum_{j=1}^{t-1} b_j< n-1-i\leq\sum_{j=1}^t b_j$. By the
inductive hypothesis, $\tilde{S}$ is the least element of the set
\begin{equation}\label{eq:Tmin}
\{\tilde{T}\in\STab(\nu,b)\mid
\min\{i\mid \tilde{T}(i,\nu_i)=h(b)\}\geq l_2\}.
\end{equation}

It is easy to see that $b^i=a^{i+1}$ for $0\leq i<n$. We
show that
\begin{equation}\label{eq:ll}
l'_{i}=l_{i+1}\text{ and }\nu^i=\mu^{i+1}\quad(1\leq i<n)
\end{equation}
by induction on $i$.
Since $0\notin B$, we have
\begin{align}\label{eq:thm3.1}
l'_1&=l(\nu^0,b^0,l'_0)\notag
\nexteq
l(\mu^1,a^1,l_2)\notag
\nexteq
\begin{cases}
l(\mu^1,a^1,l(\mu^1,a^1,1))&\text{if $1\in A$,}\\
l(\mu^1,a^1,l(\mu^1,a^1,l_1))&\text{otherwise}
\end{cases}\notag
\nexteq
\begin{cases}
l(\mu^1,a^1,1)&\text{if $1\in A$,}\\
l(\mu^1,a^1,l_1)&\text{otherwise}
\end{cases}\notag
\nexteq
l_2.
\end{align}
Then $\nu^1=(\nu^0)^{(l'_1)}=(\mu^1)^{(l_2)}=\mu^2$.

Assume $i\geq2$ and $l'_{i-1}=l_{i}$ and $\nu^{i-1}=\mu^{i}$. Since $i-1\in B$ if and only if $i\in A$, we have
\begin{align*}
l'_{i}&=
\begin{cases}
l(\nu^{i-1},b^{i-1},1)&\text{ if } {i-1}\in B,\\
l(\nu^{i-1},b^{i-1},l'_{i-1})&\text{ otherwise }
\end{cases}\nexteq
\begin{cases}
l(\mu^{i},a^{i},1)&\text{ if } i\in A,\\
l(\mu^{i},a^{i},l_{i})&\text{ otherwise }
\end{cases}\nexteq
l_{i+1}.
\end{align*}
Then
$\nu^{i}=(\nu^{i-1})^{(l'_{i})}=(\mu^{i})^{(l_{i+1})}=\mu^{i+1}$.

Next we show
\begin{equation}\label{eq:StS}
S|_{D_{\nu}}=\tilde{S}.
\end{equation}
First, since $b=a^1$, we obtain
\[
\sum_{i=1}^jb_i=\begin{cases}
\sum_{i=1}^ja_i&\text{ if }j<h,\\
\sum_{i=1}^ha_i-1&\text{ if }j=h.
\end{cases}
\]
Suppose that $\sum_{j=1}^{t-1} b_j< n-1-i\leq\sum_{j=1}^{t} b_j$.
Then
\[
\sum_{j=1}^{t-1}a_j<n-(i+1)\leq\sum_{j=1}^ta_j,
\]
so $\tilde{S}(l'_{i+1},\nu^i_{l'_{i+1}})=t=S(l_{i+2},\mu^{i+1}_{l_{i+2}})$.
Thus, we have proved (\ref{eq:StS}).

Next we show $S\in\STab(\mu,a)$.
If $l_1=1$ then this is clear, since
$\tilde{S}\in\STab(\nu,b)$. Suppose $l_1\geq 2$. Since $(l_1-1,\mu_{l_1})\in
D_{\nu}$, there exists an $i\in\{1,2,\dots,n-1\}$ such that
$(l_1-1,\mu_{l_1})=(l_{i+1},\mu^i_{l_{i+1}})$.
Since
$l_1\leq l_2\leq\cdots\leq l_{a_h}$, we have
\[i+1> a_h=n-\sum_{j=1}^{h-1}a_j=n-\sum_{j=1}^{h-1}b_j,\]
and hence
\[n-1-(i-1)\leq\sum_{j=1}^{h-1}b_j.\]
This implies
\begin{equation}\label{17.eq.2}
\tilde{S}(l'_{i},\nu^{i-1}_{l'_{i}})\leq h-1.
\end{equation}
Now
\begin{align*}
S(l_1-1,\mu_{l_1})&=
S(l_{i+1},\mu^i_{l_{i+1}})
\nexteq
\tilde{S}(l'_{i},\nu^{i-1}_{l'_{i}})
&&\text{(by (\ref{eq:ll}), (\ref{eq:StS}))}
\nextlq h
&&\text{(by (\ref{17.eq.2}))}
\nexteq S(l_1,\mu_{l_1}).
\end{align*}
Since $\tilde{S}\in\STab(\nu,b)$, this implies
$S\in\STab(\mu,a)$.

It remains to show that $S\leq T$ for all $T$ in the set (\ref{eq:thm.3A}).
Define partitions $\tau$ and $\sigma$ by (\ref{Tt}) and (\ref{Ss}),
respectively.

Suppose first that $\min\{i\mid T(i,\mu_i)=h\}>l_1$. Write
$T^{-1}(h)=\{(t_1,t'_1),\ldots,(t_{a_h},t'_{a_h})\}$ with
$l_1<t_1\leq t_2\leq\cdots\leq t_{a_h}$.
Then Lemma~\ref{lem.3f} implies
$\tau=\mu^{(t_1,\ldots,t_{a_h})}\unrhd
\mu^{a_h}=\sigma$.
Since $l_1<t_1$, we have $\tau\rhd\sigma$.
Thus $S\leq T$.

Next suppose that $\min\{i\mid T(i,\mu_i)=h\}=l_1$.
Set $\tilde{T}=T|_{D_{\nu}}$ and observe
$\tilde{T}\in\STab(\nu,b)$.
Set $m=\min\{i\mid \tilde{T}(i,\nu_i)=h(b)\}$.
By Lemma~\ref{lem.4b},
we have $m\in R(\nu,b)$, so $m\geq l(\nu,b)$.
If $a_h=1$, then $l_2=l(\nu,b)$, so $m\geq l_2$. If $a_h\geq2$, then
$h(b)=h$, so $m\geq l_1$. Thus $m\geq l(\nu,b,l_1)=l_2$.
Therefore,
$\tilde{T}$ belong to the set (\ref{eq:Tmin}). This implies
$\tilde{S}\leq\tilde{T}$, and hence
either
$\tau\rhd\sigma$, or
$\tau=\sigma$ and $\tilde{S}|_{D_\sigma}\leq \tilde{T}|_{D_\sigma}$.
Since $\tilde{S}|_{D_\sigma}=S|_{D_\sigma}$ and
$\tilde{T}|_{D_\tau}=T|_{D_\tau}$, the recursive definition of
the partial order implies $S\leq T$.
\end{proof}

\begin{alg}\label{a.1}
\textbf{Input}: $\mu=(\mu_1,\mu_2,\ldots,\mu_k)\vdash n$ and
$a=(a_1,a_2,\ldots,a_h)\vDash n$ such that $\mu\unrhd\lambda(a)$.\\
\textbf{Output}: $S\in\STab(\mu,a)$.\\
\textbf{Initialization}: $\nu:=\mu$, $b:=a$, $m:=n$ and $l':=1$.\\
\textbf{while} $m>1$ \textbf{do}\\
\begin{tabular}{c|l}
&\;$h:=h(b)$ and $l:=l(\nu,b,l')$.\\
&\;$S(l,\nu_l):=h$.\\
&\;\textbf{if} $b_h=1$, \textbf{then}
$l'\leftarrow 1$.\\
&\;\textbf{else} $l'\leftarrow l$.\\
&\;$\nu\leftarrow\nu^{(l)}$, $b\leftarrow\tilde{b}$ and $m\leftarrow
m-1$.\\
\end{tabular}\\
\textbf{end}\\
$S(1,1):=1$.\\
Output $S$.
\end{alg}

\begin{exam}
Let $\mu=(4,4,1,1)\vdash 10$ and $a=(1,3,2,2,2)\vDash 10$. Then a
tableau $S\in\STab(\mu,a)$ is obtained via Algorithm~\ref{a.1}.
\[
\begin{tabular}{|c|c|c|c|c|c|c|}
\hline
$\nu$ & $b$ & $m$ & $l'$ & $h$ & $l$ & $S$  \\
\hline $(4,4,1,1)$ & $(1,3,2,2,2)$ & $10$ & $1$ & $5$ & $2$ & $S(2,4)=5$ \\
\hline $(4,3,1,1)$ & $(1,3,2,2,1)$ & $9$ & $2$ & $5$ & $2$ & $S(2,3)=5$ \\
\hline $(4,2,1,1)$ & $(1,3,2,2)$ & $8$ & $1$ & $4$ & $1$ & $S(1,4)=4$ \\
\hline $(3,2,1,1)$ & $(1,3,2,1)$ & $7$ & $1$ & $4$ & $4$ & $S(4,1)=4$ \\
\hline $(3,2,1)$ & $(1,3,2)$ & $6$ & $1$ & $3$ & $2$ & $S(2,2)=3$ \\
\hline $(3,1,1)$ & $(1,3,1)$ & $5$ & $2$ & $3$ & $3$ & $S(3,1)=3$ \\
\hline $(3,1)$ & $(1,3)$ & $4$ & $1$ & $2$ & $1$ & $S(1,3)=2$ \\
\hline $(2,1)$ & $(1,2)$ & $3$ & $1$ & $2$ & $1$ & $S(1,2)=2$ \\
\hline $(1,1)$ & $(1,1)$ & $2$ & $1$ & $2$ & $2$ & $S(2,1)=2$ \\
\hline $(1)$ & $(1)$ & $1$ & $1$ &  &  & $S(1,1)=1$ \\
\hline
\end{tabular}
\]
Thus
\[
S=\begin{array}{cccc}
1&2&2&4\\
2&3&5&5\\
3&&&\\
4
\end{array}
\]
and $S$ is the least element of $\STab(\mu,a)$.
\end{exam}

\begin{rmk}
Let $a$ and $b$ be compositions of $n$ with $\lambda(a)=\lambda(b)$. Then
there exists a bijection from $\STab(\mu,a)$ to $\STab(\mu,b)$ using \cite[Lemma 3.7.1]{C}, 
but they are not isomorphic as partially ordered sets.
For example, let $\mu=(4,4,1,1)\vdash 10$, $a=(1,3,2,2,2)\vDash 10$ and $b=(1,2,2,2,3)\vDash 10$. 
Then $(\STab(\mu,b),\leq)$ is a totally ordered set, while 
$(\STab(\mu,a),\leq)$ contains two incomparable tableaux:
\[
T=\begin{array}{cccc}
1&2&2&3\\
2&4&4&5\\
3&&&\\
5
\end{array},
\]
\[
S=\begin{array}{cccc}
1&2&2&4\\
2&3&3&5\\
4&&&\\
5
\end{array}.
\]
Indeed, define $\tau^{(p)}$ and $\sigma^{(p)}$ by (\ref{def.eq.1}) and (\ref{def.eq.2}), respectively. 
Then $\tau^{(3)}=(4,1,1)$ and $\sigma^{(3)}=(3,3)$ are incomparable. Thus
$(\STab(\mu,a),\leq)$ is not totally ordered, hence it is not isomorphic to $(\STab(\mu,b),\leq)$.
\end{rmk}

\end{document}